\newtheorem{theorem}{Theorem}[section]
\newtheorem{lemma}[theorem]{Lemma}
\newtheorem{proposition}[theorem]{Proposition}
\theoremstyle{definition}
\newtheorem{definition}[theorem]{Definition}
\theoremstyle{remark}
\newtheorem{remark}[theorem]{Remark}
\newbox\gnBoxA
\newdimen\gnCornerHgt
\newdimen\gnArgHgt
\def\gnmb #1{%
\setbox\gnBoxA=\hbox{$#1$}%
\gnArgHgt=\ht\gnBoxA%
\ifnum     \gnArgHgt<\gnCornerHgt \gnArgHgt=0pt%
\else \advance \gnArgHgt by -\gnCornerHgt%
\fi \raise\gnArgHgt\hbox{$\ulcorner$} \box\gnBoxA %
\raise\gnArgHgt\hbox{$\urcorner$}}
\title{Reflection ranks via infinitary derivations}
\author{James Walsh}
\thanks{Thanks to Antonio Montalb\'{a}n for comments on a draft. Thanks to Anton Freund and Henry Towsner for helpful correspondence. Special thanks to Fedor Pakhomov; this paper is a descendant of our joint work. Thanks also to anonymous referees for their very helpful comments.}
\address{Department of Philosophy, New York University}
\email{jmw534@nyu.edu}
\begin{document}

\begin{abstract}
There is no infinite sequence of $\Pi^1_1$-sound extensions of $\mathsf{ACA}_0$ each of which proves $\Pi^1_1$-reflection of the next. This engenders a well-founded ``reflection ranking'' of $\Pi^1_1$-sound extensions of $\mathsf{ACA}_0$. For any $\Pi^1_1$-sound theory $T$ extending $\mathsf{ACA}^+_0$, the reflection rank of $T$ equals the proof-theoretic ordinal of $T$. This provides an alternative characterization of the notion of ``proof-theoretic ordinal,'' which is one of the central concepts of proof theory. We provide an alternative proof of this theorem using cut-elimination for infinitary derivations.
\end{abstract}

\maketitle

\section{Introduction}

It is a well-known empirical phenomenon that natural axiomatic theories are well-ordered by their consistency strength \cite{walsh2021hierarchy}. However, consistency strength does \emph{not} well-order theories \emph{in general}; indeed, as a purely formal matter, the consistency strength ordering is neither linear nor well-founded.\footnote{Feferman writes that the well-foundedness ``can't be otherwise
given that at any point in studying it there are only a finite number of theories that have
been considered'' \cite{feferman2014continuum}. I believe that Feferman undersells the phenomenon; the natural theories include, e.g., each theory of the form $\mathsf{I\Sigma_n}$, and there are infinitely many of them. Logicians have also studied infinite hierarchies of natural bounding, determinacy, and comprehension axioms, among others.} Pakhomov and the author investigated the well-foundedness half of this phenomenon by studying a related notion of proof-theoretic strength, namely, $\Pi^1_1$-reflection strength. In fact, that $\Pi^1_1$-sound extensions of $\mathsf{ACA}_0$ can be \emph{ranked} according to this notion of relative strength \cite[Theorem 3.2]{pakhomov2018reflection}.\footnote{For another version of this theorem with an alternate proof, see \cite[Theorem 3.7]{walsh2024incompleteness}.} To state this result, we recall that $\mathsf{RFN}_{\Pi^1_1}(T)$ is a sentence expressing the $\Pi^1_1$-soundness of $T$; we also note that $T \prec_{\Pi^1_1} U$ if and only if $U \vdash \mathsf{RFN}_{\Pi^1_1}(T)$.\footnote{Note that the theories related by $\prec_{\Pi^1_1}$ must be understood intensionally to make sense of the formula $\mathsf{RFN}_{\Pi^1_1}(T)$. Hence, the $U$ before the turnstile is really the theory defined by $U$.}

\begin{theorem}\label{reflection-ranks}
The restriction of $\prec_{\Pi^1_1}$ to the $\Pi^1_1$-sound extensions of $\mathsf{ACA}_0$ is well-founded.
\end{theorem}

Furthermore---in a large swathe of cases---a theory's rank in this well-founded reflection ordering coincides with its proof-theoretic ordinal, where the \emph{proof-theoretic ordinal} of a theory $T$ is the supremum of the $T$-provably well-founded primitive recursive linear orders. Proof-theoretic ordinals emerged from Gentzen's consistency proof of $\mathsf{PA}$ and have become central objects of study in proof theory. Indeed, one of the most prominent research programs in in proof theory---known as \emph{ordinal analysis}---consists in calculating theories' proof-theoretic ordinals and extracting information from these calculations.

Before stating the main result connecting reflection ranks and proof-theoretic ordinals, let me remind the reader that $\mathsf{ACA}_0^+$ is the theory $\mathsf{ACA}_0+ \forall X(X^{(\omega)}$ exists$)$.\footnote{An alternate axiomatization is $\mathsf{ACA}_0+$ ``Every set is contained in an $\omega$-model of $\mathsf{ACA}_0$''.} The following originally appeared as \cite[Theorem 5.16]{pakhomov2018reflection}:

\begin{theorem}\label{main-old}
For $\Pi^1_1$-sound extensions $T$ of $\mathsf{ACA}_0^+$, the $\prec_{\Pi^1_1}$ rank of $T$ equals the proof-theoretic ordinal of $T$.
\end{theorem}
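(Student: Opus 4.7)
The plan is to prove both bounds $r(T) \le |T|$ and $r(T) \ge |T|$ (writing $r(T)$ for the $\prec_{\Pi^1_1}$-rank and $|T|$ for the proof-theoretic ordinal) by setting up an infinitary Tait-style derivation calculus for $\Pi^1_1$-reasoning over $\mathsf{ACA}_0^+$. The calculus would have an $\omega$-rule for arithmetical universal quantifiers, a branching rule for second-order universal quantifiers (sound with respect to $\omega$-model semantics), and cut rules indexed by formula complexity; each derivation is labeled with an ordinal height and a cut rank. Three standard preliminary lemmas would be established: (a) every finitary proof in a $\Pi^1_1$-sound extension $U$ of $\mathsf{ACA}_0^+$ embeds into an infinitary derivation of height $\omega\cdot k$ and cut rank $k$ for some $k$ depending on $U$; (b) provably within $\mathsf{ACA}_0^+$, cut-elimination reduces a derivation of height $\beta$ and cut rank $n+1$ to one of higher (but still well-founded) height and cut rank $n$; and (c) any cut-free derivation of a $\Pi^1_1$-sentence is sound provided its height ordinal is well-founded. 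The hypothesis $\forall X (X^{(\omega)}\text{ exists})$ is used precisely to handle the set parameters arising when cut-eliminating second-order cuts.

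For $r(T)\ge |T|$, fix $\alpha<|T|$ together with a primitive recursive ordering $\prec$ of type $\alpha$ with $T\vdash \mathsf{WO}(\prec)$, and define iterated reflection theories $T_\beta=\mathsf{ACA}_0^+ + \{\mathsf{RFN}_{\Pi^1_1}(T_\gamma):\gamma\prec\beta\}$ by primitive recursion along $\prec$. External induction on $\beta$ then shows $T_\beta\prec_{\Pi^1_1} T$: any $T_\beta$-derivation of a $\Pi^1_1$-sentence unwinds, by (a) and (b), into a cut-free infinitary derivation of height bounded in terms of $\beta$, and (c) together with $T\vdash \mathsf{WO}(\prec)$ then yields that $T$ proves soundness of every such derivation. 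Hence the $T_\beta$ form a $\prec_{\Pi^1_1}$-chain of length $\alpha$ below $T$, so $r(T)\ge\alpha$. For the reverse inequality $r(T)\le|T|$, I would argue by transfinite induction on rank that $T$ proves well-foundedness of some primitive recursive order of type $\alpha$ for every $\alpha\le r(T)$: at a successor stage, pick $S\prec_{\Pi^1_1} T$ witnessing $r(S)\ge\alpha$; the inductive hypothesis supplies an $S$-provably well-founded primitive recursive order of type $\alpha$, then $T\vdash \mathsf{RFN}_{\Pi^1_1}(S)$ promotes this to $T$-provable well-foundedness, and adjoining a fresh top element produces a $T$-provably well-founded order of type $\alpha+1$; limit stages are handled by disjoint unions along cofinal sequences of witnessing subtheories.

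The main obstacle is the quantitative bookkeeping in (b): cut-elimination must be carried out provably within $\mathsf{ACA}_0^+$ itself, and the resulting heights must match reflection rank on the nose, without any elementary-recursive blow-up. For each $\mathsf{RFN}_{\Pi^1_1}$-layer in the iterated theories $T_\beta$ to correspond to precisely one ordinal step in the reflection rank, the calculus must be arranged so that cuts on genuinely $\Pi^1_1$ formulas are reducible within the base theory (essentially via the second-order branching rule), leaving only arithmetical cuts to contribute to the height. Designing such a calculus and then aligning its cut-elimination bounds with the iterated reflection hierarchy is the heart of what distinguishes this proof from the reflection-theoretic argument in \cite{pakhomov2018reflection}.
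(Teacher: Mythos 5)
Your proposal follows the paper's announced theme---embedding finitary proofs into an infinitary calculus and eliminating cuts---but it breaks down at exactly the point you yourself flag as ``the heart'': the design of a second-order calculus in which cuts on genuinely $\Pi^1_1$ formulas are eliminable with ordinal height bounds. No such effective cut-elimination exists at this level: a rule branching over \emph{all} second-order instances is uncountably branching and cannot even be arithmetized in $\mathsf{ACA}_0$ (where, for the final argument, every step must be carried out), and eliminating second-order quantifier cuts in $\omega$-logic is an impredicative problem requiring machinery (collapsing, Buchholz-style $\Omega$-rules) far beyond this setting. The paper sidesteps the issue entirely rather than solving it: by Lemma~\ref{conservativity}, every extension of $\mathsf{ACA}_0$ by a $\Pi^1_1$ sentence---in particular $\mathsf{ACA}_0+\mathsf{WO}(\varepsilon_\gamma)$---is conservative over a pseudo-$\Pi^1_1$ theory $\mathsf{PA}(X)+\{\mathsf{TI}(\varepsilon_\gamma,\varphi):\varphi \text{ pseudo-}\Pi^1_1\}$, so the infinitary calculus (Definition~\ref{proofdefinition}) is purely first-order with a single free set variable $X$, all cuts have finite rank, the $\mathsf{TI}$ axioms embed with truth-complexity at most $\varepsilon_\gamma+2$ (Proposition~\ref{ordinals}), and finitely many rounds of $\alpha\mapsto\omega^\alpha$ cut-elimination (Theorem~\ref{elimination}) stay below $\varepsilon_{\gamma+1}$, giving Lemma~\ref{analysis} provably in $\mathsf{ACA}_0$.

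Your quantitative demand---heights matching reflection rank ``on the nose, without any elementary-recursive blow-up,'' one ordinal step per $\mathsf{RFN}$-layer---is a misdiagnosis, and your stated use of $\mathsf{ACA}_0^+$ is misplaced. The correct calibration is that each reflection layer costs an $\varepsilon$-number jump: $\mathsf{ACA}_0+\mathsf{RFN}^{1+\alpha}_{\Pi^1_1}(\mathsf{ACA}_0)\equiv\mathsf{ACA}_0+\mathsf{WO}(\varepsilon_\alpha)$ (Theorem~\ref{iterations}). The hypothesis $\forall X(X^{(\omega)}$ exists$)$ plays no role in cut-elimination; it is used exactly once, at the very end, via Theorem~\ref{mm} ($\mathsf{WO}(\mathcal{X})\to\mathsf{WO}(\varepsilon_{\mathcal{X}})$ in $\mathsf{ACA}_0^+$), which converts $T\vdash\mathsf{WO}(\alpha)$ into $T\vdash\mathsf{WO}(\varepsilon_\alpha)$ and hence into $T\vdash\mathsf{RFN}^{1+\alpha}_{\Pi^1_1}(\mathsf{ACA}_0)$: the closure of $|T|_{\mathsf{WO}}$ under $\xi\mapsto\varepsilon_\xi$ for $T\supseteq\mathsf{ACA}_0^+$ is precisely what absorbs the unavoidable blow-up and forces the two measures to agree. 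The same point defeats your lower-bound argument as written: $T\vdash\mathsf{WO}(\prec)$ with $\mathrm{otyp}(\prec)=\alpha$ does not let $T$ verify well-foundedness of the cut-free heights, which are towers of exponentials over $\beta$, and the axioms $\mathsf{RFN}_{\Pi^1_1}(T_\gamma)$ of your theories $T_\beta$ come with no truth-complexity bound usable in your embedding step (a)---the paper instead iterates through the $\mathsf{WO}(\varepsilon_\gamma)$-axiomatized equivalents, whose axioms do embed. Two smaller defects: the ``for all $\alpha$'' claims must themselves be proved in $\mathsf{ACA}_0$, which the paper manages with Schmerl's reflexive induction rather than your external transfinite induction; and your limit-stage ``disjoint unions'' in the upper-bound direction are both unnecessary (the supremum in the definition of $|T|_{\mathsf{WO}}$ handles limits) and, as stated, yield the wrong order types, since a disjoint union is not an ordered sum.
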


Theorem \ref{reflection-ranks} and Theorem \ref{main-old} jointly yield the following claims: (i) Sufficiently sound theories can be ranked according to their proof-theoretic strength, given the right notion of proof-theoretic strength. (ii) The ranks in this ordering are proof-theoretically familiar values, namely, the proof-theoretic ordinals. Hence, Theorem \ref{main-old} connects two distinct topics in proof theory: iterated reflection and ordinal analysis. In \cite{pakhomov2018reflection}, the focus was on the iterated reflection side, and Theorem \ref{main-old} was derived using rather complicated proofs of Schmerl-style conservation theorems. In this paper, we present a different proof of Theorem \ref{main-old}. Though this new proof still requires some reasoning about iterated reflection, Schmerl-style conservation theorems are replaced with a traditional ordinal analysis technique, namely, cut-elimination for infinitary derivations. Accordingly, this proof should make Theorem \ref{main-old} accessible to a broader audience of logicians. In addition, the existence of these two proofs strengthens the connections between these two complementary areas of proof theory.

Here is our plan for the rest of the paper. In \textsection \ref{prelim-section} we cover some preliminaries, including our treatment of reflection principles and ordinal notations. In \textsection \ref{cut-lemmas} we discuss the treatment of infinitary derivations within $\mathsf{ACA}_0$. In \textsection \ref{easy-direction} we prove the main theorem.


\section{Preliminaries}\label{prelim-section}

In this section we cover some preliminary material and definitions. First, we give the definitions of $\Pi^1_1$-reflection and its iterations. Next, we review the definition of pseudo-$\Pi^1_1$ theories. We then cover our treatment of ordinal notations. Next, we collect some facts about well-ordering principles. Finally, we review Schmerl's technique of reflexive induction.

\subsection{Reflection Principles}

We are concerned with recursively enumerable theories. Officially, a theory $T$ is a $\Sigma_1$ formula $\mathsf{Ax}_T(x)$ that is understood as a formula defining the (G\"{o}del numbers of) axioms of $T$ in the standard model of arithmetic, i.e., the set of axioms of T is $\{\varphi \mid \mathbb{N}\models \mathsf{Ax}_T(\varphi)\}$. Thus, we are considering theories intensionally, via their axioms,
rather than as deductively closed sets of formulas.

In what follows, note that we have also fixed a standard choice for the provability predicate $\mathsf{Pr}(x,y)$, which expresses that $x$ is a theorem of $y$. To be slightly more explicit, $\mathsf{Pr}(x,y)$ states that formula with G\"odel number $x$ is provable from the axiom system defined by the $\Sigma_1$ formula with G\"odel number $y$. For a theory $T$ (i.e., a $\Sigma_1$ formula defining a set of axioms), we will abbreviate this as $\mathsf{Pr}_T(x)$. Our definitions of reflection principles should be interpreted with this fixed provability predicate in mind.

To introduce the $\Pi^1_1$ reflection schema, we must first review some notation. In what follows, $\overline{m}$ is the standard numeral of the number $m$ and $\ulcorner\varphi\urcorner$ is the G\"odel number of the formula $\varphi$. The expression $\ulcorner \varphi(\dot{x_1},\dots,\dot{x_n})\urcorner$ denotes an elementary definable term for the function taking $k_1,\dots,k_n$ to the G\"odel number $\ulcorner\varphi(\overline{k_1},\dots, \overline{k_n} )\urcorner$ of the result of substituting the numerals $\overline{k_1},\dots, \overline{k_n}$ for the variables $x_1,\dots,x_n$ in $\varphi$.

Officially speaking, $\mathsf{RFN}_{\Pi^1_1}(T)$ is the schema:
$$\forall x_1\dots\forall x_n \Big(\mathsf{Pr}_T\big(\ulcorner\varphi(\dot{x_1},\dots,\dot{x_n})\urcorner\big) \to \varphi (x_1,\dots,x_n) \Big) \text{ for } \varphi\in \Pi^1_1.$$
In fact, since the $\Pi^1_1$ truth-definition is available in $\mathsf{ACA}_0$, this schema follows from a single instance of itself in $\mathsf{ACA}_0$; see \cite{pakhomov2018reflection} for discussion. Hence we may regard this schema as a single sentence.

\begin{remark}
    I will mostly be informal rather than adhering strictly for conventions for  G\"odel numbering, the dot notation, and so on. For instance, going forward, I will write the $\Pi^1_1$-reflection schema as follows:
$$\forall x_1\dots\forall x_n \Big(\mathsf{Pr}_T\big(\varphi(x_1,\dots,x_n\big) \to \varphi (x_1,\dots,x_n) \Big) \text{ for } \varphi\in \Pi^1_1.$$
However, ``officially speaking,'' the formulas should have dots and corner quotes in them.
\end{remark}

Suppose that we have fixed an ordinal notation system $\prec$ (in \textsection \ref{ordinal-notations} I will explicitly define what ordinal notations are). We may then define the formula $\mathsf{RFN}^\alpha_{\Pi^1_1}(\mathsf{ACA}_0)$ of $\alpha$-iterated uniform $\Pi^1_1$-reflection over $\mathsf{ACA}_0$ via the fixed point lemma:
$$\mathsf{ACA}_0 \vdash \mathsf{RFN}_{\Pi^1_1}^\alpha(\mathsf{ACA}_0) \leftrightarrow \forall \beta \prec \alpha \; \mathsf{RFN}_{\Pi^1_1}\big(\mathsf{ACA}_0+ \mathsf{RFN}_{\Pi^1_1}^\beta(\mathsf{ACA}_0)\big).$$

\begin{remark}
Note that in our previous paper \cite{pakhomov2018reflection} we worked with a different definition of theories axiomatized by iterated reflection wherein limit stages yielded infinitely axiomatized theories. However, for the purposes of this paper, it suffices to limit our attention to iterated reflection principles that are finitely axiomatized. So our definitions yield that $\mathsf{RFN}_{\Pi^1_1}^\lambda(\mathsf{ACA}_0)$ is a sentence for any limit $\lambda$.
\end{remark}


\subsection{Pseudo-$\Pi^1_1$ Theories}

In this paper we will examine theories in two different languages:
\begin{enumerate}
    \item $\mathcal{L}_X$ is the language of first-order arithmetic extended with one additional free set variable $X$; we also call this the \emph{pseudo-$\Pi^1_1$ language}. Formulas from this language are known as \emph{pseudo-$\Pi^1_1$ formulas.} That is, we call \emph{all} the $\mathcal{L}_X$ formulas pseudo-$\Pi^1_1$ formulas (even if they do not contain any occurrences of the free variable $X$). By a pseudo-$\Pi^1_1$ theory we mean a theory axiomatized by pseudo-$\Pi^1_1$ formulas. 
    \item $\mathcal{L}_2$ is the language of second-order arithmetic.
\end{enumerate} We consider the pseudo-$\Pi^1_1$ language to be a sublanguage of the language of second-order arithmetic by identifying each pseudo-$\Pi^1_1$ sentence $\mathsf{F}$ with the second-order sentence $\forall X \;\mathsf{F}$.

The theory $\mathsf{PA}(X)$ is the pseudo-$\Pi^1_1$ pendant of $\mathsf{PA}$. That is, $\mathsf{PA}(X)$ contains (i) the axioms of $\mathsf{PA}$ and (ii) induction axioms for all formulas in the language, including those with the set variable $X$.

$\mathsf{ACA}_0$ is sometimes regarded as a second-order pendant of $\mathsf{PA}$. Indeed, $\mathsf{ACA}_0$ is arithmetically conservative over $\mathsf{PA}$; see, e.g., \cite{hirschfeldt2015slicing, simpson2009subsystems}. In fact, $\mathsf{ACA}_0$ is conservative over $\mathsf{PA}(X)$ for pseudo-$\Pi^1_1$ formulas, so it is often possible to work in $\mathsf{PA}(X)$ and transfer results to $\mathsf{ACA}_0$. Moreover, every extension of $\mathsf{ACA}_0$ by a $\Pi^1_1$ sentence is conservative over a corresponding extension of $\mathsf{PA}(X)$ by pseudo-$\Pi^1_1$ formulas. The following statement originally occurred as \cite[Lemma 4.10]{pakhomov2018reflection}:

\begin{lemma}\label{conservativity}
$(\mathsf{ACA}_0)$ Let $\varphi(X)$ and $\psi(X)$ be pseudo-$\Pi^1_1$ formulas. Suppose that $\mathsf{ACA}_0+\forall X\;\varphi(X) \vdash \forall X\;\psi(X)$. Then $\mathsf{PA}(X) + \{ \varphi[\theta] : \theta \textrm{ is pseudo } \Pi^1_1  \} \vdash \psi(X)$.
\end{lemma}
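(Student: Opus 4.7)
The plan is to adapt the classical asymmetric interpretation witnessing the conservativity of $\mathsf{ACA}_0$ over $\mathsf{PA}(X)$ so that it also handles the additional axiom $\forall X\,\varphi(X)$.

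First I would fix a derivation $d$ of $\forall X\,\psi(X)$ from $\mathsf{ACA}_0 + \forall X\,\varphi(X)$. Only finitely many set variables $Y_1,\dots,Y_n$ occur in $d$, and only finitely many comprehension axioms and finitely many instances of $\forall X\,\varphi(X)$ are used. After a partial cut-elimination that leaves only arithmetical cuts (or an equivalent normalization of finite proofs), each use of arithmetical comprehension in $d$ can be paired with its defining pseudo-$\Pi^1_1$ formula $\xi_i(y,\vec{Y})$, and each use of $\forall X\,\varphi(X)$ can be taken to occur as an instance $\varphi(Z_j)$ for some set variable $Z_j$ that is never subsequently bound.

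Next I would define a translation $(\cdot)^{\ast}$ from second-order formulas to pseudo-$\Pi^1_1$ formulas that interprets set quantifiers schematically and replaces each set variable by an assigned pseudo-$\Pi^1_1$ formula (possibly with extra parameters). Applied to $d$, this translation sends every axiom of $\mathsf{ACA}_0$ to a theorem of $\mathsf{PA}(X)$---arithmetical comprehension is witnessed by the corresponding $\xi_i$, and full induction in the second-order language becomes induction over pseudo-$\Pi^1_1$ formulas, which is included in $\mathsf{PA}(X)$---sends each use of $\forall X\,\varphi(X)$ to a substitution instance $\varphi[\theta_j]$ from the schema, and sends the conclusion $\forall X\,\psi(X)$ to $\psi(X)$ with $X$ left as a free set variable. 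The result is a derivation in $\mathsf{PA}(X) + \{\varphi[\theta_j]:1\le j\le n\}$ of $\psi(X)$, which suffices.

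The main obstacle is formalizing the whole construction inside $\mathsf{ACA}_0$, since the lemma is stated as a theorem of $\mathsf{ACA}_0$. The translation and substitution on codes of proofs are primitive recursive, so the existence of the translated derivation is a $\Sigma^0_1$ assertion; verifying that each translated rule remains sound requires the arithmetical truth predicate available in $\mathsf{ACA}_0$ together with a straightforward induction on the structure of $d$. I would streamline this by routing the argument through the infinitary proof system set up later in this section, for which cut-elimination and soundness are already formalized in $\mathsf{ACA}_0$, and then reading off the finitary conclusion via the usual embedding of finite derivations into infinitary ones.
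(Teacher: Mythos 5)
Note first that this paper does not prove Lemma \ref{conservativity} at all: it is imported verbatim as Lemma 4.10 of \cite{pakhomov2018reflection}, where it is established model-theoretically rather than by proof translation. There one argues (formalizing the completeness theorem, which is available in $\mathsf{ACA}_0$) that any Henkin model $(M,A)$ of $\mathsf{PA}(X)+\{\varphi[\theta]\}$ expands to a model of $\mathsf{ACA}_0+\forall X\,\varphi(X)$ by taking as second-order part the sets defined over $(M,A)$ by pseudo-$\Pi^1_1$ formulas with parameters; since these sets are uniformly indexed by formulas, set quantification collapses to number quantification over codes, satisfaction for the expansion is arithmetical in the elementary diagram, and soundness then transfers $\forall X\,\psi(X)$ back to $\psi(A)$. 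Your syntactic route---free-cut elimination followed by substitution of abstracts for set variables, in the style of the Takeuti--Sieg conservativity proofs for arithmetical comprehension over $\mathsf{PA}$---is a genuinely different strategy and is workable in principle. Its price is that the phrase ``interprets set quantifiers schematically'' conceals the entire content of the argument: there is no compositional translation eliminating second-order quantifiers; what saves you is precisely that after free-cut elimination every remaining set-quantifier inference is anchored in an axiom, so the $\exists Y$ of each comprehension instance can be witnessed by the abstract $\{x : \xi_i(x)\}$ and $\forall X\,\varphi(X)$ only ever appears instantiated. That claim has to be proved by induction on the normalized derivation with explicit eigenvariable bookkeeping; as written you assert it rather than argue it. Relatedly, your formalization remark is off target: producing the translated derivation is a purely syntactic transformation, so no arithmetical truth predicate is needed---what $\mathsf{ACA}_0$ must supply is only arithmetical induction on $d$ plus the totality of the superexponential blow-up of partial cut elimination, which it easily proves. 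A truth predicate would enter only if you traded derivability for soundness, i.e., reverted to the model-theoretic proof.

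The final ``streamlining,'' however, is a step that would fail outright. The infinitary calculus of Definition \ref{proofdefinition} derives only pseudo-$\Pi^1_1$ sequents and has no rules for set quantifiers, so the second-order derivation $d$ of $\forall X\,\psi(X)$ from $\mathsf{ACA}_0+\forall X\,\varphi(X)$ cannot be embedded into it in the first place. More fundamentally, the payoff of that machinery, Lemma \ref{correctness}, is a \emph{truth} (soundness) statement about cut-free well-founded derivations, whereas the conclusion of Lemma \ref{conservativity} is a $\Sigma^0_1$ \emph{provability} statement about the finitary system $\mathsf{PA}(X)+\{\varphi[\theta]\}$; there is no formalized passage from infinitary derivations back to finite ones, so there is no ``reading off the finitary conclusion.'' In the paper's architecture the infinitary system serves the opposite direction: Lemma \ref{conservativity} is applied first, inside the proof of Lemma \ref{analysis}, to descend from $\mathsf{ACA}_0+\mathsf{WO}(\varepsilon_\gamma)$ to $\mathsf{PA}^{\varepsilon_\gamma}$, and only \emph{then} does one embed into the infinitary calculus and cut-eliminate. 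So your core translation argument should be carried out directly (or replaced by the model expansion of \cite{pakhomov2018reflection}), and the last paragraph should be dropped.
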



\subsection{Ordinal Notations}\label{ordinal-notations}

A standard approach to treating ordinal notations in arithmetic is to \emph{fix} a notation system up to a particular recursive ordinal. However, this approach is insufficiently general for present purposes, since we want to formalize results that involve quantification over all ordinal notations. Thus, we will use an alternative approach to ordinal notations from \cite{pakhomov2018reflection}.

Officially, an ordinal notation system is a tuple $\langle \varphi(x),\psi(x,y),p,n\rangle$ where $\varphi,\psi\in\Delta_0$, $\varphi(n)$ is true according to $\mathsf{True}_{\Sigma_1}$ and $p$ is an $\mathsf{EA}$ proof of the fact that $\psi(x,y)$ defines a strict linear order on the set $\{x\mid \varphi(x)\}$.

We may then define a partial order $\prec$ on the set of all notation systems. Any tuples $\alpha=\langle \varphi,\psi,p,n\rangle$ and $\alpha'=\langle \varphi',\psi',p',n\rangle$ are $\prec$-incomparable if either $\varphi\neq\varphi'$ or $\psi\neq \psi'$ or $p\neq p'$. If $\alpha,\beta$ have the form $\alpha=\langle \varphi,\psi,p,n\rangle$ and $\beta=\langle \varphi,\psi,p,m\rangle$ we put $\alpha\prec\beta$ if $\mathsf{True}_{\Sigma_1}\big(\psi(n,m)\big)$ but $\mathsf{True}_{\Sigma_1}\big(\neg \psi(m,n)\big)$.

For an ordinal notation $\alpha$ the value $|\alpha|$ is either an ordinal or $\infty$. If the lower cone $\big( \{\beta \mid \beta\prec\alpha\},\prec\big)$ is well-founded, then $|\alpha|$ is the ordinal isomorphic to the well-ordering $\big( \{\beta \mid \beta\prec\alpha\},\prec \big)$. Otherwise, $|\alpha|=\infty$. That is, $|\alpha|$ is the well-founded $\prec$-rank of $\alpha$.

We will also work with ordinal notation systems that are given by some combinatorially defined system of terms and order on them. One example of such a system is the Cantor ordinal notation system up to $\varepsilon_0$. For the notations that we consider it will always be possible to formalize, within $\mathsf{EA}$, both their definition and the proof that the ordering is linear.

We will use expressions like $\omega^\alpha$ and $\varepsilon_\alpha$ where $\alpha$ is some ordinal notation system. Let us consider a notation system $\alpha=\langle \varphi(x),\psi(x,y),p,n\rangle$ and define the notation system $\omega^\alpha =\langle \varphi ' (x),\psi'(x,y),p',n'\rangle$. We want the order $\prec_{\omega^\alpha}$ to be the order on the terms $\omega^{a_1} + \dots + \omega^{a_k}$, where $a_1\succeq_{\alpha}\dots \succeq_{\alpha} a_k$. And the order $\prec_{\omega^\alpha}$ is defined as the usual order on Cantor normal forms, where we compare $a_i$ by the order $\prec_\alpha$. By arithmetizing this definition of $\prec_{\omega^\alpha}$ we get $\varphi',\psi'$, and $p'$. We put $n'$ to be the number encoding the term $\omega^n$. Note that, according to this definition, $\alpha$ and $\omega^\alpha$ are $\prec$-incomparable. However, if $\alpha\prec\beta$, then $\omega^\alpha\prec \omega^\beta$.

The definition of the notation system $\varepsilon_\alpha$ is similar to that of $\omega^\alpha$. The system of terms for $\varepsilon_\alpha$ consists of nested Cantor normal forms built up from 0 and elements $\varepsilon_a$, $a\in\mathsf{dom}(\prec_\alpha)$. The comparison of nested Cantor normal forms is defined in the standard fashion, where we compare elements $\varepsilon_a$ and $\varepsilon_b$ as $a\prec_\alpha b$. For more information on our treatment of ordinal notation systems, we refer the reader to \cite[\textsection 2.2]{pakhomov2018reflection}.

\subsection{Well-ordering Principles}

Before continuing we record two facts:
\begin{theorem}
\label{preserves} Provably in $\mathsf{ACA}_0$, for any linear order $\mathcal{X}$, if $\mathcal{X}$ is well-ordered then so is $\omega^{\mathcal{X}}$.
\end{theorem}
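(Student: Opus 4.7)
The natural plan is to argue by contradiction: assume $\mathcal{X}$ is well-founded and suppose there were an infinite $\omega^{\mathcal{X}}$-descending sequence $\sigma_0 \succ \sigma_1 \succ \sigma_2 \succ \cdots$, then arithmetically extract from it an infinite $\mathcal{X}$-descending sequence, contradicting well-foundedness of $\mathcal{X}$. Since the whole construction proceeds from a set given to us, the extraction is arithmetical in that set, so the argument goes through in $\mathsf{ACA}_0$. I represent elements of $\omega^{\mathcal{X}}$ as Cantor normal forms, i.e.\ finite weakly $\preceq_{\mathcal{X}}$-decreasing tuples $\sigma_i = \langle \alpha_{i,0}, \alpha_{i,1}, \dots, \alpha_{i,k_i-1}\rangle$ of elements of $\mathcal{X}$, compared by the usual ``top-down'' lexicographic order on such tuples.

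The first step is to read off, for each $i$, the position $p_i$ at which $\sigma_i$ and $\sigma_{i+1}$ first differ in the top-down lexicographic comparison (taking $p_i = k_i$ if $\sigma_{i+1}$ is a proper prefix of $\sigma_i$). Set $\alpha_i := \alpha_{i,p_i}$ when $p_i < k_i$; otherwise $\sigma_{i+1}$ lives in a strictly shorter Cantor normal form. A short analysis shows that along the descent the \emph{common prefix} can drop only finitely often without producing a strict decrease of some exponent, because a strictly decreasing prefix length together with constancy of the exponents would force $\sigma_{i+1}$ to be an initial segment of $\sigma_i$ with the same exponents, which is a well-founded situation on its own. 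The second step is therefore to locate a level $\ell$ and an infinite set $I$ of indices such that for all $i \in I$ the first $\ell$ exponents of $\sigma_i$ agree while $p_i = \ell$; this is a $\Sigma^0_2$ stabilization statement, and the right $\ell$ is obtained by taking $\ell = \liminf_i p_i$, which exists arithmetically from the sequence.

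Once the prefix has been pinned down, the tail exponents $\alpha_{i,\ell}$ for $i \in I$ form a weakly $\preceq_{\mathcal{X}}$-decreasing sequence in $\mathcal{X}$. Inside this weakly decreasing sequence I then need to find infinitely many strict drops. The standard way is reflexive / subsidiary induction: if the $\alpha_{i,\ell}$ were to equal some fixed value $\alpha^\star$ from some index onward, then the differences between consecutive $\sigma_i$ would all occur strictly below level $\ell$ in a Cantor normal form headed by $\alpha^\star$, so one could pass to the ``tail'' descending sequence with one exponent stripped off and iterate the construction one level deeper. Since each iteration shortens the Cantor normal forms or strictly lowers an exponent, I extract an infinite strictly $\prec_{\mathcal{X}}$-decreasing sequence from the $\alpha_{i,\ell}$, contradicting well-foundedness.

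The main obstacle is formalizing the ``iterate the construction one level deeper'' step inside $\mathsf{ACA}_0$: naively it looks like a recursion on Cantor normal forms, which needs an arithmetical parameter to carry around the current stem. I would handle this by packaging the whole argument as a single reflexive induction (as reviewed in the preliminary section on Schmerl's technique) applied to the statement ``every infinite $\omega^{\mathcal{X}}$-descending sequence yields an infinite $\mathcal{X}$-descending sequence,'' so that the inductive call occurs under a provability operator and no actual set-level transfinite recursion is required. Everything else is routine arithmetical bookkeeping: extracting $p_i$, $\alpha_i$, the stabilization index $\ell$, and the strict-drop subsequence are all arithmetical in the given descending sequence, and hence available in $\mathsf{ACA}_0$.
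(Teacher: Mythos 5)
You should first note that the paper itself gives no proof of this theorem: it simply cites Girard \cite{girard1987proof}, so the comparison here is between your attempt and the standard argument, not a proof printed in the paper. Against that standard, your proposal has a genuine gap at its pivot point: the claim that $\ell = \liminf_i p_i$ ``exists arithmetically.'' The liminf of the first-difference positions can be infinite, and nothing in your preliminary ``short analysis'' rules this out: the positions $p_i$ can escape to infinity while every single step is a genuine exponent drop (not a truncation) and the lengths $k_i$ grow. Concretely, if $x_0 \succ_{\mathcal{X}} x_1 \succ_{\mathcal{X}} x_2 \succ_{\mathcal{X}} \cdots$, then $\sigma_i = \langle x_0,\dots,x_{i-1},x_{i-1}\rangle$ and $\sigma_{i+1} = \langle x_0,\dots,x_{i-1},x_i,x_i\rangle$ give a strictly descending sequence in $\omega^{\mathcal{X}}$ with $p_i = i+1 \to \infty$. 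Of course such an $\mathcal{X}$ is ill-founded, but that is exactly the point: under the reductio hypothesis you only know that $\omega^{\mathcal{X}}$ is ill-founded, and you must \emph{extract} an $\mathcal{X}$-descent from the $p_i \to \infty$ configuration rather than declare it ``a well-founded situation on its own.'' That extraction is the mathematical core of the theorem and is missing from your proposal. The standard $\mathsf{ACA}_0$ treatment handles it as follows: when the difference positions eventually exceed each fixed $j$, every coordinate stabilizes to a value $\beta_j$, with $\beta_1 \succeq \beta_2 \succeq \cdots$; the stabilization indices $N_j$ and the limit string are arithmetically definable from the given sequence (totality of $j \mapsto N_j$ by arithmetical induction, no jump iteration needed); and then the \emph{finiteness} of each Cantor normal form forces, between stage $N_j$ and stage $N_{j'}$ for $j'$ exceeding the current length, a strict drop of some coordinate against the limit string --- otherwise the later term would properly extend the earlier one and be larger, contradicting descent. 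Chaining these drops, using the weak decrease of exponents inside a single normal form, yields a strictly $\prec_{\mathcal{X}}$-descending sequence $\beta_{q_1} \succ \beta_{q_2} \succ \cdots$. By contrast, in the case your proposal does cover ($\liminf p_i = \ell$ finite), the conclusion is immediate: for the infinitely many $i$ with $p_i = \ell$, the coordinate-$\ell$ exponents drop strictly while they weakly decrease in between, so your elaborate third step is unnecessary there.

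Your proposed use of reflexive induction is also a misapplication. Schmerl's device (as reviewed in \textsection 2.4 of the paper) proves $\forall \alpha\, \varphi(\alpha)$ over an ordinal notation system when the induction step is available under the provability predicate $\mathsf{Pr}_T$; it is a statement about what $T$ \emph{proves}, not a license to run an unbounded recursion inside a single argument about a fixed descending sequence. In your setting there is no ordinal parameter to induct on: the statement ``every $\omega^{\mathcal{X}}$-descending sequence yields an $\mathcal{X}$-descending sequence'' is a single assertion about a given set, and placing the ``one level deeper'' call under a provability operator would only yield provability facts, not the set-existence conclusion you need in the model at hand. Fortunately it is also unnecessary: the only recursion required is along $\omega$, and since the stabilized values, stabilization indices, and drop positions are all uniformly arithmetically definable from the given descending sequence, plain arithmetical comprehension and arithmetical induction --- both available in $\mathsf{ACA}_0$ --- suffice, which is precisely why the theorem lands at $\mathsf{ACA}_0$ (indeed, by a result of Hirst, the implication is equivalent to $\mathsf{ACA}_0$ over $\mathsf{RCA}_0$, so some use of arithmetical comprehension is unavoidable).
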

\begin{theorem}
\label{mm}
 Provably in $\mathsf{ACA}_0^+$, for any linear order $\mathcal{X}$, if $\mathcal{X}$ is well-ordered then so is $\varepsilon_{\mathcal{X}}$.
\end{theorem}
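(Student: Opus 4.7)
The plan is to reduce Theorem~\ref{mm} to Theorem~\ref{preserves} by iterating the operation $\mathcal{Y} \mapsto \omega^{\mathcal{Y}}$ along $\omega$, using the $\omega$-jump axiom of $\mathsf{ACA}_0^+$ to supply the amount of recursion needed. Concretely, identify $\varepsilon_{\mathcal{X}}$ with $\bigcup_n \omega_n(\mathcal{X})$, where $\omega_n(\mathcal{X})$ consists of ordinal terms over $\mathcal{X}$ of $\omega$-nesting depth at most $n$, so that $\omega_0(\mathcal{X}) = \mathcal{X}$ and $\omega_{n+1}(\mathcal{X}) \cong \omega^{\omega_n(\mathcal{X})}$. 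Any infinite descending sequence in $\varepsilon_{\mathcal{X}}$ is bounded in nesting depth by its first term, so it lies inside some $\omega_n(\mathcal{X})$; hence it suffices to prove in $\mathsf{ACA}_0^+$ that $\omega_n(\mathcal{X})$ is well-ordered for every $n$.

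A naive induction on $n$ using Theorem~\ref{preserves} for the step fails because the induction formula \enquote{$\omega_n(\mathcal{X})$ is well-ordered} is $\Pi^1_1$ in $n$ and $\Pi^1_1$-induction is not available in $\mathsf{ACA}_0$. To work around this, I would reread the proof of Theorem~\ref{preserves} as furnishing an explicit arithmetic operation: there is an arithmetic-in-parameters procedure $F$ that converts any infinite descending sequence $S$ in $\omega^{\mathcal{Y}}$ into an infinite descending sequence $F(S,\mathcal{Y})$ in $\mathcal{Y}$. Granting this, a putative descending sequence $S$ in $\omega_n(\mathcal{X})$ would, after $n$ applications of $F$, yield a descending sequence in $\omega_0(\mathcal{X}) = \mathcal{X}$, contradicting well-foundedness of $\mathcal{X}$.

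The point of $\mathsf{ACA}_0^+$ is precisely to make this iteration legal. Since the $k$-fold iterate of $F$ is computable from the $k$-th Turing jump of $S \oplus \mathcal{X}$, one can form the whole finite sequence $S_0 = S$, $S_{k+1} = F(S_k, \omega_{n-k-1}(\mathcal{X}))$ for $k < n$ uniformly from $(S \oplus \mathcal{X})^{(\omega)}$, which is guaranteed to exist by the $\omega$-jump axiom. Arithmetical comprehension relative to $(S \oplus \mathcal{X})^{(\omega)}$ then suffices to extract $S_n$, which is an infinite descending sequence in $\mathcal{X}$, yielding the desired contradiction.

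I expect the main obstacle to be preparatory bookkeeping rather than the overall strategy: extracting from the informal proof of Theorem~\ref{preserves} an explicit arithmetic functional $F$ (uniform in the parameter order $\mathcal{Y}$), and then carefully verifying that the recursion defining $S_0, \ldots, S_n$ can be read off arithmetically from $(S \oplus \mathcal{X})^{(\omega)}$. Once that is done, the contradiction with well-foundedness of $\mathcal{X}$ is immediate.
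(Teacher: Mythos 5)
The proposal fails at its first step: the identification of $\varepsilon_{\mathcal{X}}$ with $\bigcup_n \omega_n(\mathcal{X})$ is incorrect. The union of the finite $\omega$-towers over $\mathcal{X}$ is the closure of $\mathcal{X}$ under $+$ and $\xi \mapsto \omega^{\xi}$, whose order type is only the \emph{least} $\varepsilon$-number strictly above $\mathsf{otyp}(\mathcal{X})$; for instance, for $\mathcal{X} = \omega$ your union has order type $\varepsilon_0$, whereas $\varepsilon_{\mathcal{X}}$ must have order type $\varepsilon_\omega = \sup_n \varepsilon_n$. The notation system $\varepsilon_{\mathcal{X}}$ of Theorem \ref{mm} (as in \cite{marcone2011veblen}) denotes the $\mathcal{X}$-th $\varepsilon$-number, and its terms include a constant $\varepsilon_x$ for each $x \in \mathcal{X}$; below each such constant the closure process restarts. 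Consequently your auxiliary claim that a descending sequence is ``bounded in nesting depth by its first term'' also fails: a sequence may begin at the depth-one term $\varepsilon_x$ and then descend through terms $\omega_k(\varepsilon_y + 1)$ with $y \prec x$ and $k$ unbounded, so it is not trapped in any $\omega_n(\mathcal{X})$ and the $n$-fold application of your functional $F$ never gets started.

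What your machinery does plausibly establish --- and the idea of using $(S \oplus \mathcal{X})^{(\omega)}$ to uniformize the iterates of $F$, whose arithmetic complexity grows with the number of applications, is sound --- is the statement $\forall n\, \big(\mathsf{WO}(\mathcal{X}) \to \mathsf{WO}(\omega_n(\mathcal{X}))\big)$, the well-ordering-preservation principle for finite towers. But that statement is strictly weaker than Theorem \ref{mm}: it sits at the level of $\mathsf{ACA}_0'$ (``for all $n$, the $n$-th jump exists''), whereas the $\varepsilon$-preservation principle is \emph{equivalent} to $\mathsf{ACA}_0^+$ over $\mathsf{RCA}_0$ \cite{marcone2011veblen}, so no argument that merely iterates Theorem \ref{preserves} $\omega$ times along tower height can suffice. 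Note also that the paper does not prove Theorem \ref{mm} at all; it cites \cite{marcone2011veblen}. The known proofs (Marcone--Montalb\'{a}n's computability-theoretic one, or Afshari--Rathjen's proof-theoretic one) must engage with the transfinite structure of $\mathcal{X}$ itself --- for example, by using the $\omega$-jump to obtain a coded $\omega$-model of $\mathsf{ACA}_0$ containing the putative descending sequence and then running transfinite induction along $\mathcal{X}$, with the statement ``the model satisfies $\mathsf{WO}(\varepsilon_{\restriction x})$'' being arithmetic in the code of the model --- rather than a length-$\omega$ iteration of the $\omega$-power step. To repair your write-up you would need to replace the union decomposition by the genuine term system with $\varepsilon_x$-constants and add a recursion along $\mathcal{X}$, which is an essentially different argument.
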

For a proof of the first see \cite{girard1987proof}; for a proof of the second see \cite{marcone2011veblen}. These theorems are stated quite generally, in terms of arbitrary linear orders. They straightforwardly apply to ordinal notation systems, given our treatment of ordinal notation systems.

    Since Theorem \ref{preserves} records a $\Sigma_1$ fact (namely, that $\mathsf{ACA}_0$ proves something), it too is provable in $\mathsf{ACA}_0$. Hence we have:
    $$\mathsf{ACA}_0\vdash\text{``$\mathsf{ACA}_0 \vdash \forall \mathcal{X}\big( \mathsf{WO}(\mathcal{X}) \to \mathsf{WO}(\omega^\mathcal{X})\big)$.''}$$
    Thus, $\mathsf{ACA}_0$ proves that its own provably well-founded ordinals must be closed under exponentiation base $\omega$. Hence, we have the following:
    
    \begin{proposition}[$\mathsf{ACA}_0$]\label{clarify}
If any extension $T$ of $\mathsf{ACA}_0$ proves $\mathsf{WO}(\varepsilon_\delta)$ then, for every $\alpha<\varepsilon_{\delta+1}$, $T$ proves $\mathsf{WO}(\alpha)$.
    \end{proposition}


\subsection{Reflexive induction}\label{reflexive-section}

We will use Schmerl's technique of \emph{reflexive induction} \cite{schmerl1979fine}. Reflexive induction is a way of simulating large amounts of transfinite induction in weak theories. The technique is facilitated by the following theorem; we include the proof since it is so short.

\begin{theorem}[Schmerl]
Let $T$ be a recursively axiomatized theory containing $\mathsf{EA}$. Suppose $T \vdash \forall \alpha \Big( \mathsf{Pr}_T\big(\forall \beta\prec \alpha \varphi(\beta)\big) \to \varphi(\alpha) \Big).$
Then $T\vdash \forall \alpha \varphi(\alpha)$.
\end{theorem}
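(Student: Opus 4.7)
The plan is to deduce this directly from L\"{o}b's theorem, applied to the sentence $\forall \alpha\,\varphi(\alpha)$. Recall that L\"{o}b's theorem says that if $T \vdash \mathsf{Pr}_T(\sigma) \to \sigma$ for some sentence $\sigma$, then $T \vdash \sigma$. Since $\mathsf{EA} \subseteq T$ and $T$ is recursively axiomatized, L\"{o}b's theorem is available in $T$ in the standard way.

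First I would fix $\sigma$ to be $\forall \alpha\,\varphi(\alpha)$ and aim to prove inside $T$ that $\mathsf{Pr}_T(\sigma) \to \sigma$. Reasoning in $T$, assume $\mathsf{Pr}_T\bigl(\forall \alpha\,\varphi(\alpha)\bigr)$, fix an arbitrary $\alpha$, and note that provably in $\mathsf{EA}$, from a proof of $\forall \alpha\,\varphi(\alpha)$ one can, uniformly in $\alpha$, construct a proof of $\forall \beta \prec \alpha\,\varphi(\beta)$. Formalizing this step gives $\mathsf{Pr}_T\bigl(\forall \beta \prec \alpha\,\varphi(\beta)\bigr)$. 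Applying the hypothesis of the theorem, which is available as a theorem of $T$, yields $\varphi(\alpha)$. Since $\alpha$ was arbitrary, we conclude $\forall \alpha\,\varphi(\alpha)$.

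Having established $T \vdash \mathsf{Pr}_T(\sigma) \to \sigma$ with $\sigma = \forall \alpha\,\varphi(\alpha)$, an application of L\"{o}b's theorem gives the desired conclusion $T \vdash \forall \alpha\,\varphi(\alpha)$.

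The only step that requires any care is the formalized substitution inside the provability predicate, i.e.\ checking that $\mathsf{EA}$ verifies the uniform transformation of a proof of $\forall \alpha\,\varphi(\alpha)$ into a proof of $\forall \beta \prec \alpha\,\varphi(\beta)$; this is a routine use of provable $\Sigma_1$-completeness combined with the standard properties of $\mathsf{Pr}_T$, so I do not anticipate any genuine obstacle.
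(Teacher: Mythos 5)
Your proposal is correct and is essentially the paper's own proof: the paper likewise derives $T \vdash \mathsf{Pr}_T\bigl(\forall\alpha\,\varphi(\alpha)\bigr) \to \forall\alpha\,\varphi(\alpha)$ from the hypothesis (via the formalized step taking a proof of $\forall\alpha\,\varphi(\alpha)$ to a proof of $\forall\beta\prec\alpha\,\varphi(\beta)$, uniformly in $\alpha$) and then concludes by L\"{o}b's theorem. Your version simply spells out the intermediate derivability-conditions step that the paper leaves implicit; the only quibble is that this step is closure of $\mathsf{Pr}_T$ under numeral substitution and logical consequence rather than provable $\Sigma_1$-completeness, but that is a matter of labeling, not a gap.
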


\begin{proof}
Suppose that $T \vdash \forall \alpha \Big( \mathsf{Pr}_T\big(\forall \beta\prec \alpha \varphi(\beta)\big) \to \varphi(\alpha) \Big).$ We infer that:
\begin{flalign*}
T &\vdash \forall \alpha \mathsf{Pr}_T\big(\forall \beta\prec \alpha \varphi(\beta)\big) \to \forall \alpha \varphi(\alpha)\\
T &\vdash \mathsf{Pr}_T\big(\forall \alpha \varphi(\alpha)\big) \to \forall \alpha \varphi(\alpha)
\end{flalign*}
L\"{o}b's Theorem then yields $T\vdash \forall \alpha \varphi(\alpha)$.
\end{proof}


\section{Infinitary Derivations}\label{cut-lemmas}

Before proving the main theorem, we must first prove, within a theory of the form $\mathsf{ACA}_0+\mathsf{WO}(\varepsilon_\alpha)$, the $\Pi^1_1$-soundness of $\mathsf{ACA}_0+\mathsf{WO}(\varepsilon_\gamma)$ uniformly for all $\gamma<\alpha$. That is, we must prove the following:

\begin{theorem}[$\mathsf{ACA}_0$]\label{to be applied}
$\mathsf{ACA}_0+\mathsf{WO}(\varepsilon_\alpha)$ proves that for every $\gamma<\alpha$, $\mathsf{ACA}_0+\mathsf{WO}(\varepsilon_\gamma)$ is $\Pi^1_1$-sound.
\end{theorem}

We do this by carrying out, within $\mathsf{ACA}_0+\mathsf{WO}(\varepsilon_\alpha)$, ordinal analyses of the theories $\mathsf{ACA}_0+\mathsf{WO}(\varepsilon_\gamma)$ for all $\gamma<\alpha$. The strategy of the proof is this. We consider a proof of a $\Pi^1_1$ formula $\varphi$ from the axiom system $\mathsf{ACA}_0+\mathsf{WO}(\varepsilon_\gamma)$. We then embed this proof into a $\Pi^1_1$-complete infinitary proof calculus for $\omega$-logic. From the cut-elimination theorem for $\omega$-logic, we infer that $\varphi$ has a cut-free proof in $\omega$-logic whose height is less than $\varepsilon_\alpha$. So we may restrict our attention to the fragment of $\omega$-logic consisting only of proofs whose height is less than $\varepsilon_\alpha$. We can prove the soundness of this fragment using transfinite induction along $\varepsilon_\alpha$. The truth of $\varphi$ follows.

Accordingly, we must explain how it is that we can reason about infinitary proof systems \emph{within} $\mathsf{ACA}_0$. That is the goal of this section. First, we present the standard definition of infinitary proof systems; though this definition eludes $\mathsf{ACA}_0$, it should help the reader develop an intuitive feel for infinitary derivations. Then we present a definition of infinitary derivations within $\mathsf{ACA}_0$. Afterwards, we prove a soundness lemma in $\mathsf{ACA}_0$, namely, that well-founded cut-free proofs yield only true $\Pi^1_1$ conclusions. We then explain how we can reason about transformations of infinitary proofs within $\mathsf{ACA}_0$. We conclude this section by collecting a few crucial lemmas about infinitary derivations that are necessary for proving the main theorem.

\subsection{A Semi-formal Calculus}

The standard definition of infinitary proof systems uses transfinite recursion, which is beyond the scope of $\mathsf{ACA}_0$. Nevertheless, I will start by presenting this standard definition, since I expect that it will better aid readers' intuitions Only afterwards do we discuss how to arithmetize this definition.

\begin{definition}
\label{proofdefinition}
Let $diag(\mathbb{N})$ be the atomic diagram of $\mathbb{N}$ in the signature $(0,1,+,\times)$. We define the relation $\vdash^\alpha_\rho \Delta$ via transfinite recursion.

$\vdash^\alpha_\rho \Delta$ holds if at least one of the following conditions is met:
\begin{itemize}
\item $\Delta\cap diag(\mathbb{N})\neq\emptyset$.
\item For some $t$ and $s$ such that $t^\mathbb{N}=s^\mathbb{N}$, $\Delta$ contains both formulas $s\notin X, t\in X$.
\item $\Delta=\Gamma,A_1\wedge A_2$ and for all $i\in \{1,2\}$ there is $\alpha_i\prec \alpha$ such that $\vdash^{\alpha_i}_\rho \Gamma, A_i $.
\item $\Delta=\Gamma,\forall x \; A(x)$ and for all $i\in \mathbb{N}$ there is $\alpha_i\prec \alpha$ such that $\vdash^{\alpha_i}_\rho \Gamma, A(i) $.
\item $\Delta=\Gamma,A_1\lor A_2$ and for some $i\in \{1,2\}$ there is $\alpha_i\prec \alpha$ such that $\vdash^{\alpha_i}_\rho \Gamma, A_i$.
\item $\Delta=\Gamma,\exists x \; A(x)$ and for some $i\in \mathbb{N}$ there is $\alpha_i\prec \alpha$ such that $\vdash^{\alpha_i}_\rho \Gamma, A(i)$.
\item For some sentence $F$ such that $\mathit{rk}(F)<\rho$ and for some $\alpha_1,\alpha_2\prec \alpha$, $\vdash^{\alpha_1}_\rho \Delta, F$ and $\vdash^{\alpha_2}_\rho \Delta, \neg F$.
\end{itemize}
\end{definition}

\subsection{Arithmetizing Infinitary Derivations}

In this section we give a definition of infinitary derivations that works in $\mathsf{ACA}_0$. I follow the approach in \cite[\textsection 4.2]{walsh2024incompleteness}.

The basic idea behind our definition of infinitary proofs in $\mathsf{ACA}_0$ is that infinitary proofs are $\omega$-branching trees. Each node in the proof is \emph{tagged} with a sequent, a rule, an ordinal notation, and a cut rank:
\begin{definition}[$\mathsf{ACA}_0$]
Let $\mathsf{SEQ}$ be the set of finite sequents, i.e., sets of formulas in (Tait calculus) normal form in the signature $(0,1,+,\times)$. Let $$\mathsf{RULE} =\{\mathsf{AxM}, \mathsf{AxL}, \wedge, \vee, \forall, \exists, \mathsf{CUT}, \mathsf{REP} \}.$$
\end{definition}

We demand that the trees satisfy \emph{local correctness conditions}. The local correctness conditions merely say that if a node is tagged with a sequent $\Delta$ and rule $R$, then the premises of that node are tagged with sequents that are correct for the rule $R$. Buchholz essentially introduces these local correctness conditions (changed only slightly here) in \cite[Definitions 2.1--2.3]{buchholz1991notation}.

\begin{definition}[$\mathsf{ACA}_0$]\label{local}
Let $(\Delta,R)\in \mathsf{SEQ}\times \mathsf{RULE}$ and let $(\Delta)_{i\in I}$ be a sequence of sequents (the premises of $\Delta$). We say that $(\Delta,R)$ and $(\Delta)_{i\in I}$ jointly satisfy the \emph{local correctness conditions} if each of the following holds:
\begin{itemize}
    \item[(AxM)] If $R=\mathsf{AxM}$ then $\Delta\cap\mathsf{Diag}(\mathbb{N})\neq\emptyset.$
    \item[(AxL)] If $R=\mathsf{AxL}$ then there are $t^\mathbb{N}=s^\mathbb{N}$ such that $s\notin X,t\in X\in \Delta$.
    \item[($\wedge$)] If $R=\wedge$ then $I=\{1,2\}$ and for some $A_1$ and $A_2$: 
    $$ \text{$A_1\wedge A_2\in\Delta$ and for all $i\in\{1,2\}$, $\Delta_i \subseteq \Delta,A_i$.}$$
    \item[($\wedge$)] If $R=\vee$ then $I\subseteq\{1,2\}$ and for some $A_1$ and $A_2$: 
    $$ \text{$A_1\vee A_2\in\Delta$ and for some $i\in\{1,2\}$, $\Delta_i \subseteq \Delta,A_i$.}$$
    \item[($\forall$)] If $R=\forall$ then $I=\mathbb{N}$ and for some $\forall xA(x)$: 
    $$ \text{$\forall xA(x)\in\Delta$ and for all $i\in \mathbb{N}$, $\Delta_i \subseteq \Delta,A(i)$.}$$
    \item[($\exists$)] If $R=\exists$ then $I\subseteq\mathbb{N}$ and for some $\exists xA(x)$: 
    $$ \text{$\exists xA(x)\in\Delta$ and for some $i\in \mathbb{N}$, $\Delta_i \subseteq \Delta,A(i)$.}$$
    \item[(CUT)] If $R=\mathsf{CUT}$ then $I=\{1,2 \}$ and for some $A$: 
    $$ \text{$\Delta_1 \subseteq \Delta,A$ and $\Delta_2\subseteq \Delta,\neg A$.}$$
    \item[(REP)] If $R=\mathsf{REP}$ then $I=\{1\}$ and $ \text{$\Delta_1 = \Delta$.}$
\end{itemize}
\end{definition}

The repetition rule \textsf{REP} must be included for technical reasons. With the definition of the local correctness conditions on board, we may specify which $\omega$-branching trees are infinitary proofs.

\begin{definition}[$\mathsf{ACA}_0$]\label{inf-proof}
Given an ordinal notation system $\prec$, an \emph{infinitary proof} is an $\omega$-branching tree where each node is labeled by a quadruple $(\Delta,R,\alpha,n)$ consisting of a sequent $\Delta$, rule $R$, an ordinal notation $\alpha$, and a number $n$ such that:
\begin{enumerate}
    \item The ordinal labels strictly descend (in the sense of $\prec$) from the root towards the axioms.
    \item If a conclusion $(\Delta,\mathsf{CUT},\alpha,n)$ has premises with sequent labels $\Delta_1,A$ and $\Delta_2,\neg A$ then $n>\mathsf{rk}(A)$; number labels cannot ascend from the root towards the axioms.
\item The local correctness conditions from Definition \ref{local} are satisfied.
\end{enumerate}
\end{definition}

Now let's introduce some notation.
\begin{definition}[$\mathsf{ACA}_0$]
    If $P$ is an infinitary proof such that, for some $R$, $(\Delta, R, \alpha, n) \in P$, we write $P\vdash^\alpha_n \Delta$. If there is an infinitary proof $P$ such that $P\vdash^\alpha_n \Delta$, we write $\vdash^\alpha_n \Delta$.
\end{definition}

\subsection{The Soundness Lemma}

Before proving the main lemma, we collect a number of facts concerning infinitary derivations. The first such fact is that our semi-formal system is $\Pi^1_1$-sound. Given the \emph{standard} Definition \ref{proofdefinition} of infinitary proof calculi, one can establish this easily by transfinite induction. Of course, we are not officially using Definition \ref{proofdefinition}. Hence, we take more care than is typical, and provide the proof in $\mathsf{ACA}_0$ using the formalization of proofs given in Definition \ref{inf-proof}.

\begin{lemma}
\label{correctness} 
For each $\Pi^0_{<\omega}$ formula $\varphi(X,\vec{x})$, the theory $\mathsf{ACA}_0$ proves:   If $\alpha$ is well-founded and $\vdash^\alpha_0 \varphi(X,\vec{p})$, for some vector $\vec{p}$ of number parameters, then $\forall X\;\varphi(X,\vec{p})$.
\end{lemma}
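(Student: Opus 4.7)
My plan is to prove the lemma by meta-induction on the structure of $\varphi$, exploiting a \emph{focused-proof} observation that obviates any need for reflexive induction or a sequent-level generalization. Specifically: in a proof of cut-rank $0$ whose end-sequent is a singleton $\{\psi\}$, every sub-sequent that Definition~\ref{proofdefinition} produces is again a singleton. The cut clause is vacuous at $\rho=0$ since it would require a cut-formula of rank $<0$; the ``$s\notin X, t\in X$'' axiom requires $\Delta$ to contain two distinct formulas and so cannot apply to a singleton; and each logical clause ($\wedge,\vee,\forall,\exists$) matches a conclusion $\{\psi\}$ only when the side-formula set $\Delta$ is empty, so the premise sub-sequents are again singletons.

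With this observation in hand, fix $\varphi(X,\vec x)$ and proceed by meta-induction on its complexity. For the base case, $\varphi$ is atomic. If $\varphi$ is an atomic arithmetic formula, the axiom clause is the only applicable one and forces $\varphi\in\mathrm{diag}(\mathbb N)$, so $\varphi$ is true in $\mathbb N$ and $\forall X\,\varphi(X,\vec p)$ follows; if $\varphi$ is of the form $s\in X$ or $s\notin X$, no clause applies to the singleton, so no proof exists and the implication holds vacuously---both observations formalize in $\mathsf{ACA}_0$. For the inductive step, I spell out the $\forall$-case; the $\wedge,\vee,\exists$ cases are analogous. If $\varphi(X,\vec x)=\forall x\, A(x,X,\vec x)$, then the $\forall$-clause applied to $P\colon\vdash^\alpha_0\{\varphi(X,\vec p)\}$ yields, for each $i\in\mathbb N$, some $\alpha_i\prec\alpha$ with $\vdash^{\alpha_i}_0\{A(i,X,\vec p)\}$. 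The meta-inductive hypothesis applied to the subformula $A$ with parameters $(i,\vec p)$ then gives $\forall X\,A(i,X,\vec p)$ for each $i$; arithmetic comprehension in $\mathsf{ACA}_0$ uniformizes over $i$ to conclude $\forall X\,\forall x\,A(x,X,\vec p)=\forall X\,\varphi(X,\vec p)$.

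The main subtlety---rather than a genuine obstacle---is the careful verification of the ``singleton in, singleton out'' observation one clause of Definition~\ref{proofdefinition} at a time; once that is in hand the meta-induction on $\varphi$ runs cleanly. Notably, no induction on $\alpha$ is ever performed, because the structural analysis of the last rule is a single arithmetical case-split, and the meta-inductive hypothesis on the subformula $A$ already encapsulates all $\alpha$-uniformity one level down.
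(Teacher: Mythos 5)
Your central structural claim---``singleton in, singleton out''---is false for the calculus of Definition~\ref{proofdefinition}, and the proposal collapses with it. Sequents here are finite \emph{sets}, and a clause hypothesis such as $\ulcorner \vdash^\alpha_\rho \Delta, A_1\vee A_2\urcorner \in P$ matches \emph{any} decomposition of the sequent, including one in which the principal formula also occurs in $\Delta$; this implicit contraction is exactly how Tait-style calculi retain principal formulas. So a singleton $\{A_1\vee A_2\}$ may be justified by the premise $\{A_1\vee A_2, A_1\}$, which is not a singleton. This retention is not a pathology but a necessity: take $\varphi(X):= t\in X \vee s\notin X$ with $t^{\mathbb{N}}=s^{\mathbb{N}}$, a valid quantifier-free pseudo-$\Pi^1_1$ formula. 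Its cut-free proof starts from the axiom $\{t\in X, s\notin X\}$---clause (ii), which, as you yourself observe, needs two formulas in the sequent---and descends to the singleton end-sequent through the non-singleton sequent $\{t\in X\vee s\notin X,\; t\in X\}$ via two $\vee$-inferences that keep the disjunction. Under your reading, the only premise available for $\{t\in X\vee s\notin X\}$ is $\{t\in X\}$ or $\{s\notin X\}$, neither of which is derivable, so $\varphi$ would be cut-free unprovable and your base case would render the lemma ``vacuous'' in precisely the cases the paper needs it. That is incompatible with the completeness facts imported from Pohlers (Lemma~\ref{bound}, Propositions~\ref{ordinals} and~\ref{embedding}, Theorem~\ref{elimination}), which Lemma~\ref{analysis} uses to manufacture cut-free derivations $\vdash^{\varepsilon_{\gamma+1}}_0 \varphi^\star(\vec{p})$ of genuinely $X$-dependent formulas---the very derivations Lemma~\ref{correctness} is then applied to. (If instead you disambiguate the clauses so that \emph{every} decomposition, including $\Delta=\emptyset$, demands a premise, your singleton analysis holds but the system becomes incomplete for such validities, with the same contradiction; either way, on your reading axiom (ii) is dead code in all relevant proofs.)

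Your closing boast that ``no induction on $\alpha$ is ever performed'' is the symptom of the same error. Once premises may repeat formulas of the conclusion, formula complexity does not decrease along branches, and the only thing that terminates the regress is descent in the ordinal labels. This is exactly how the paper argues: assuming $\neg\forall X\,\varphi(X,\vec{p})$, fix a counterexample set $A$, and by a \emph{single} instance of arithmetical comprehension (legitimate because $\varphi$ is fixed, so truth at $A$ of numerical variants of its subformulas is arithmetical) form the set $U$ of labelled sequents of $P$ all of whose formulas are false at $A$. Since axioms are true at every $A$, each member of $U$ must be the conclusion of an inference having a premise in $U$ with strictly $\prec$-smaller label, so the labels occurring in $U$ form a nonempty set with no $\prec$-least element, contradicting well-foundedness. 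No meta-induction on the build-up of $\varphi$ occurs, and the appeal to well-foundedness of the labels is essential rather than avoidable. To repair your approach you would have to generalize from singletons to arbitrary finite sets of numerical variants of subformulas of $\varphi$---at which point you are proving the paper's statement about $U$, with the label, not the formula, as the decreasing parameter.
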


\begin{proof}
We reason in $\mathsf{ACA}_0$. Assume that $\forall X \varphi(X,\vec{p})$ is false but that $\vdash^\alpha_0 \varphi(X,\vec{p})$. We will show that $\alpha$ is ill-founded.

Since $\forall X \varphi(X,\vec{p})$ is false, there is some set $A$  such that $\varphi(A,\vec{p})$. Consider some $P$ such that $P\vdash^\alpha_0\varphi(X,\vec{p})$. That is, $P$ is an $\omega$-branching tree that is labeled with quadruples according to Definition \ref{inf-proof} whose root has label $(\varphi(X,\vec{p}),R,\alpha,0)$.

We consider the set $\mathcal{U}$ of all pairs $\langle\beta,\Gamma(X)\rangle$ such that each of the following holds:
\begin{enumerate}[i]
    \item $\beta\preceq \alpha$.
    \item $\Gamma(X)$ is a sequent consisting of numerical variants of subformulas of $\varphi(X)$.
    \item $\Gamma(A)$ is false.
    \item Some subproof of $P$ witnesses $\vdash^\beta_0\Gamma(X)$, i.e., for some $R'$:
    $$(\Gamma(X),R', \beta,0)\in P.$$
\end{enumerate}
Note that this construction is valid in $\mathsf{ACA}_0$. Indeed, $\varphi(X,\vec{x})$ is fixed, so the truth of the sequents $\Gamma(A)$ that could appear in members of $\mathcal{U}$ can be expressed in $\mathsf{ACA}_0$. Notice that $\langle\alpha,\varphi(X,\vec{p})\rangle$ is in $\mathcal{U}$ and that whenever $\langle \beta,\Gamma(X)\rangle $ is in $\mathcal{U}$ there is some $\langle \beta',\Gamma'(X)\rangle$ in $\mathcal{U}$ with $\beta'\prec\beta$ (we could always choose $\langle\beta',\Gamma'(X)\rangle$ to be one of the premises used to derive $\langle\beta,\Gamma(X)\rangle$).  Thus the set of all $\beta$ that appear in elements in $\mathcal{U}$ is a set of denotations of ordinals without a least element.
\end{proof}


\subsection{Transformation Lemmas}

In this subsection we collect a few more lemmas. These lemmas are entirely standard. However, they require that we reason about \emph{transformations} of infinitary proofs within $\mathsf{ACA}_0$. Proof theorists have developed many methods for arithmetizing such transformations for infinitary proofs; examples include \cite{girard1987proof, kreisel1965mathematical, schwichtenberg1977proof}. One particularly elegant way of formalizing transformations of infinitary proofs was developed by Buchholz. In \cite[\textsection 5]{buchholz1991notation}, Buchholz describes a uniform way of developing notation systems for infinitary derivations. The definition is given entirely by primitive recursion, so it can be implemented within $\mathsf{ACA}_0$ (and much weaker systems). The term representing a proof in this notation system encodes the information in its root, i.e., its sequent, the rule it was inferred with, its ordinal bound, and its cut rank. For any such term, the local correctness of the proof it encodes is provable in weak systems (e.g., $\mathsf{PRA}$). In \cite[\textsection 6]{buchholz1991notation}, Buchholz shows that the proof transformations required for cut-elimination (e.g., the Reduction Lemma and Elimination Lemma) can be proven by reasoning about these term systems. 

Officially speaking, we adopt the Buchholz the approach to reasoning about proof transformations. To reiterate, we have made two choices:
\begin{enumerate}
    \item Infinitary proofs are defined in $\mathsf{ACA}_0$ as $\omega$-branching trees meeting the conditions laid out in Definition \ref{inf-proof}.
    \item Transformations of such infinitary proofs is carried out in the style of \cite{buchholz1991notation}; given an infinitary proof, we may concoct a term system for which the relevant transformations are provable.
\end{enumerate}

For the rest of the section, we collect some standard results that are typically proved via proof transformations. Instead of giving proofs, we give citations to the relevant literature.

\subsubsection{The Induction Lemma}

Since $\omega$-logic extends the truth-definition for arithmetical sentences (i.e., those without set variables), each arithmetical sentence has a cut-free proof in $\omega$-logic with finite height. A bit more work yields the \emph{Induction Lemma}.
\begin{lemma}[{\cite[Equation 34 following Lemma 2.1.2.4]{pohlers1998subsystems}}]
\label{bound}
For each axiom $A$ of $\mathsf{PA}$, there is an $\alpha\leq \omega+4$ such that $\vdash^\alpha_0 A$.
\end{lemma}

\subsubsection{The Converse Boundedness Lemma}

For an arithmetically definable relation $\prec$ and pseudo-$\Pi^1_1$ formula $\varphi$, we provide the following definitions:
$$\mathsf{field}(\prec):= \{ x \mid \exists y (x\prec y \vee x \prec y) \} $$
$$\mathsf{Prog}(\prec,\varphi):=\forall x\Big( \big(x \in\mathsf{field}(\prec) \wedge \forall y (y\prec x\to \varphi(y)) \big) \to \varphi(x) \Big)$$
$$\mathsf{TI}(\prec,\varphi):=\mathsf{Prog}(\prec,\varphi) \to \forall x \in \mathsf{field}(\prec)\; \varphi(x).$$ 

The statement that we need is a modest strengthening of \cite[Theorem 1.3.10]{pohlers1998subsystems}, which we call the \emph{Converse Boundedness Lemma}.
\begin{lemma}\label{ordinals} 
For any pseudo-$\Pi^1_1$ formula $\varphi$, if $\mathsf{otyp}(\prec)$ is a limit ordinal, there is an $\alpha\leq \mathsf{otyp}(\prec)+2$ such that $\vdash^\alpha_0\mathsf{TI}(\prec,\varphi)$.
\end{lemma}
To convert the proof of \cite[Theorem 1.3.10]{pohlers1998subsystems} into a proof of Lemma \ref{ordinals}, simply replace all occurrences of $t\in X$ with $\varphi(t)$.

We assume that $\prec$ is primitive recursive and its field is all of $\mathbb{N}$.

\subsubsection{Two Definitions}

Before collecting more lemmas, we recall two standard definitions. We present the definition for the reader's convenience.

\begin{definition}
We define the \emph{symmetric sum} of ordinals (in Cantor normal form) $\alpha:= \omega^{\alpha_1}+\dots +\omega^{\alpha_n}$ and $\beta= \omega^{\alpha_{n+1}}+\dots +\omega^{\alpha_m}$ as follows:
$$\alpha \# \beta := \alpha_{\pi(1)}+\dots +\alpha_{\pi(m)}$$
where $\pi$ is a permutation of the numbers $\{1,\dots,m\}$ such that:
$$1\leq i \leq j \leq m \Rightarrow \alpha_{\pi(i)}\geq \alpha_{\pi(j)}.$$
\end{definition}

\begin{remark}
Note that $\alpha\# \beta=\beta\#\alpha$.
\end{remark}

\begin{remark}\label{symmetric}
    The notion of symmetric sum can be arithmetized for ordinal notations \emph{as long as} we select ordinal notations from the same notation system. Hence, whenever we deploy a result that relies on this notion (such as Lemma \ref{embedding} and Lemma \ref{elimination}), we assume that all notations are drawn from the same notation system.
\end{remark}

\begin{definition}
The rank $\mathsf{rk}(\varphi)$ of a formula $\varphi$ is the number of logical symbols that occur in $\varphi$.
\end{definition}

\subsubsection{The Reduction Lemma}

With these definitions on board we present the \emph{Reduction Lemma}.
\begin{lemma}[{\cite[Lemma 2.1.2.7]{pohlers1998subsystems}}]
\label{embedding} 
Assume $\vdash^\alpha_\rho \Delta,F$ and $\vdash^\beta_\rho \Gamma,\neg F$, as well as $\mathsf{rk}(F)=\rho$. Then $\vdash^{\alpha\#\beta}_\rho \Delta,\Gamma$.
\end{lemma}
As stated in Remark \ref{symmetric}, we assume that $\alpha$ and $\beta$ are drawn from the same ordinal notation system so that the natural sum of the ordinal notations $\alpha$ and $\beta$ is well-defined.

\subsubsection{The Elimination Lemma}

Finally, we must state the \emph{Elimination Lemma}. Pohlers states the Elimination Lemma for genuine infinitary proof trees as follows:

\begin{theorem}[{\cite[Theorem 2.1.2.8]{pohlers1998subsystems}}]
 If $\vdash^{\alpha}_{\rho+1} \Delta$, then $\vdash^{2^\alpha}_{\rho} \Delta$.
\end{theorem}

Since our definition of infinitary proofs includes the \textsf{REP} rule, the bound $2^\alpha$ does not quite work for our formalization. Hence, when we formalize the result in $\mathsf{ACA}_0$, we must restate it slightly.
\begin{lemma}[$\mathsf{ACA}_0$]
\label{elimination}  If $\vdash^{\alpha}_{\rho+1} \Delta$, then $\vdash^{3^\alpha}_{\rho} \Delta$.
\end{lemma}

Let me summarize what this shows. Given any ordinal notation system (closed under exponentiation base 3), there is a uniform definition in $\mathsf{ACA}_0$ of the proofs labeled by notations from this system. Moreover, there is a uniform definition of the term systems arising from such proofs. So we may reason quite generally about transformations of the proofs that arise from an arbitrary ordinal notation system. Hence, in $\mathsf{ACA}_0$, we may prove, for instance, that for any proof $P\vdash^\alpha_{\rho+1}\Delta$, there is a proof $P'\vdash^{3^\alpha}_\rho \Delta$.

\section{The Proof}\label{easy-direction}

Theorem \ref{reflection-ranks} states that the restriction of $\prec_{\Pi^1_1}:=\{(T,U) \mid U \vdash \mathsf{RFN}_{\Pi^1_1}(T)\}$  to the $\Pi^1_1$-sound extensions of $\mathsf{ACA}_0$ is well-founded. We write $|T|_{\mathsf{ACA}_0}$ to denote the rank of a theory in this ordering. We call $|T|_{\mathsf{ACA}_0}$ the \emph{reflection rank} of $T$. Our goal is to connect reflection ranks with \emph{proof-theoretic ordinals}. We write $|T|_{\mathsf{WO}}$ to denote the proof-theoretic ordinal of a theory $T$, i.e., the supremum of the $T$-provably well-founded primitive recursive linear orders.

We ultimately derive the main theorem from the following equivalence:
$$\mathsf{ACA}_0+\mathsf{RFN}^{1+\alpha}_{\Pi^1_1}(\mathsf{ACA}_0) \equiv \mathsf{ACA}_0+ \mathsf{WO}(\varepsilon_{\alpha}).$$
We prove each inclusion of this equivalence separately. In \textsection \ref{easy} we establish the easy direction, namely:
\begin{equation}
\label{eq:bigcirc} 
\tag{$\bigcirc$} 
\mathsf{ACA}_0+\mathsf{RFN}^{1+\alpha}_{\Pi^1_1}(\mathsf{ACA}_0) \vdash \mathsf{WO}(\varepsilon_{\alpha}).
\end{equation}
Finally, in \textsection \ref{hard-direction} we establish the hard direction: 
\begin{equation}
\label{eq:bigtriangleup} 
\tag{$\bigtriangleup$} 
\mathsf{ACA}_0+ \mathsf{WO}(\varepsilon_{\alpha}) \vdash \mathsf{RFN}^{1+\alpha}_{\Pi^1_1}(\mathsf{ACA}_0).
\end{equation}

\subsection{The Easy Direction}\label{easy}
In this section we prove $(\bigcirc)$. We prove the claim in $\mathsf{ACA}_0$ by Schmerl's \cite{schmerl1979fine} technique of \emph{reflexive induction}; see \textsection \ref{reflexive-section}.

\begin{theorem}\label{suggestion}
$\mathsf{ACA}_0\vdash \forall\alpha (\mathsf{ACA}_0+\mathsf{RFN}^{1+\alpha}_{\Pi^1_1}(\mathsf{ACA}_0) \textrm{ proves } \mathsf{WO}(\varepsilon_{\alpha})).$
\end{theorem}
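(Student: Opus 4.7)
The plan is to apply Schmerl's reflexive induction (\S\ref{reflexive-section}) to the formula $\varphi(\alpha) := \mathsf{Pr}_{U_\alpha}\big(\mathsf{WO}(\varepsilon_\alpha)\big)$, where $U_\alpha := \mathsf{ACA}_0 + \mathsf{RFN}^{1+\alpha}_{\Pi^1_1}(\mathsf{ACA}_0)$. Reasoning in $\mathsf{ACA}_0$, I fix $\alpha$, assume $\mathsf{Pr}_{\mathsf{ACA}_0}(\forall \beta \prec \alpha\,\varphi(\beta))$, and aim to produce (externally) a proof of $\mathsf{WO}(\varepsilon_\alpha)$ from $U_\alpha$. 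The argument then unfolds inside $U_\alpha$, where the two key resources are the induction hypothesis (available by provable $\Sigma^0_1$-completeness) and the iterated reflection axioms.

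Inside $U_\alpha$, the scheme $\mathsf{RFN}^{1+\alpha}_{\Pi^1_1}$ yields $\mathsf{RFN}_{\Pi^1_1}(U_\beta)$ for every $\beta \prec \alpha$, since $1 + \beta \prec 1 + \alpha$. Iterating Theorem \ref{preserves} at the meta-level produces, uniformly in $\beta$ and $n$, a primitive-recursive $\mathsf{ACA}_0$-proof of $\mathsf{WO}(\varepsilon_\beta) \to \mathsf{WO}(\omega_n(\varepsilon_\beta + 1))$. Combining this uniform family with the induction hypothesis (via formalized $\Sigma^0_1$-completeness) gives $\forall \beta \prec \alpha\,\forall n\,\mathsf{Pr}_{U_\beta}\big(\mathsf{WO}(\omega_n(\varepsilon_\beta + 1))\big)$, and since $\mathsf{WO}(\omega_n(\varepsilon_\beta + 1))$ is $\Pi^1_1$, applying $\mathsf{RFN}_{\Pi^1_1}(U_\beta)$ yields $\forall \beta \prec \alpha\,\forall n\,\mathsf{WO}(\omega_n(\varepsilon_\beta + 1))$ inside $U_\alpha$.

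To conclude $\mathsf{WO}(\varepsilon_\alpha)$, I use that any notation $\delta \prec \varepsilon_\alpha$ is majorized by some $\omega_n(\varepsilon_\beta + 1)$ with $\beta \prec \alpha$ and $n \in \mathbb{N}$; the base case $\alpha = 0$ is handled analogously using $\varepsilon_0 = \sup_n \omega_n(1)$ together with the external iteration starting from the trivial $\mathsf{WO}(1)$. A putative descending sequence in $\varepsilon_\alpha$ thus lies entirely below some $\omega_n(\varepsilon_\beta + 1)$, i.e., inside a well-founded initial segment, a contradiction. This establishes $\varphi(\alpha)$, and reflexive induction then closes the proof.

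The main obstacle is precisely this final closure: because $\mathsf{WO}(\omega_n(\mathcal{X}))$ is $\Pi^1_1$, Theorem \ref{preserves} cannot be iterated by arithmetical induction on $n$ inside $\mathsf{ACA}_0$, so $\forall n\,\mathsf{WO}(\omega_n(\mathcal{X}))$ is not directly derivable from $\mathsf{WO}(\mathcal{X})$. The workaround is to do the induction on $n$ at the meta-level, obtaining a uniformly primitive-recursive family of $\mathsf{ACA}_0$-proofs, and to internalize the resulting uniform provability statement via the $\Pi^1_1$-reflection that $\mathsf{RFN}^{1+\alpha}_{\Pi^1_1}$ already supplies along the inductive step.
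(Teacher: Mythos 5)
Your proof is correct, and it shares the paper's skeleton---reflexive induction on $\alpha$, importing the induction hypothesis into $U_\alpha := \mathsf{ACA}_0+\mathsf{RFN}^{1+\alpha}_{\Pi^1_1}(\mathsf{ACA}_0)$ (your notation), converting provable well-foundedness into well-foundedness via the iterated reflection axioms, and climbing with Theorem \ref{preserves}---but it diverges at exactly the point where the paper is tersest, and there your version is genuinely more careful. The paper's write-up applies reflection once, to obtain $\forall\gamma\prec\alpha\;\mathsf{WO}(\varepsilon_\gamma)$ inside $U_\alpha$, and then disposes of the successor case $\varepsilon_\alpha=\varepsilon_{\beta+1}$ with a one-line appeal to the formalized Theorem \ref{preserves}. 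Taken at face value that final inference is insufficient: $\mathsf{ACA}_0+\mathsf{WO}(\varepsilon_\beta)$ does not prove $\mathsf{WO}(\varepsilon_{\beta+1})$ (this follows from the paper's own Theorem \ref{iterations} together with G\"{o}del's second incompleteness theorem), and deriving $\forall n\;\mathsf{WO}(\omega_n(\varepsilon_\beta+1))$ internally would require induction on $n$ for a $\Pi^1_1$ formula, which $\mathsf{ACA}_0$ lacks---precisely the obstacle you name. Your repair---iterate Theorem \ref{preserves} at the meta-level into a provably correct primitive recursive family of $\mathsf{ACA}_0$-proofs of $\mathsf{WO}(\varepsilon_\beta)\to\mathsf{WO}(\omega_n(\varepsilon_\beta+1))$, internalize the resulting uniform provability statement, and then apply the \emph{uniform} reflection $\mathsf{RFN}_{\Pi^1_1}(U_\beta)$ with the tower height $n$ as a numerical side parameter (so that nonstandard $n$ are covered)---is the standard and correct way to close this gap, and it makes visible why the reflection axioms, and not merely $\mathsf{WO}(\varepsilon_\beta)$, are essential at successor stages; your explicit treatment of the base case $\alpha=0$ via $\varepsilon_0=\sup_n\omega_n(1)$ and the single reflection layer supplied by the ``$1+$'' offset fills in another point the paper leaves implicit. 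Two small quibbles: importing the reflexive induction hypothesis into $U_\alpha$ needs only monotonicity of provability, since the hypothesis is already a $\mathsf{Pr}_{\mathsf{ACA}_0}$-statement, rather than provable $\Sigma^0_1$-completeness; and for the internalized uniform family the working ingredient is that the primitive recursive proof construction is verifiably correct by arithmetical induction on $n$ (available in $\mathsf{ACA}_0$), of which $\Sigma^0_1$-completeness is a corollary rather than the engine. Neither affects correctness.
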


\begin{proof}
We let $A(\alpha)$ denote the claim that $\mathsf{ACA}_0+\mathsf{RFN}^{1+\alpha}_{\Pi^1_1}(\mathsf{ACA}_0) \vdash \mathsf{WO}(\varepsilon_{\alpha})$. We want to prove that $\mathsf{ACA}_0\vdash \forall\alpha A(\alpha)$. By reflexive induction it suffices to prove that $\mathsf{ACA}_0 \vdash \forall\alpha (\mathsf{Pr}_{\mathsf{ACA}_0}(\forall\gamma<\alpha \; A(\gamma)) \rightarrow A(\alpha))$.

\textbf{Reason in} $\mathsf{ACA}_0$. We suppose the \emph{reflexive induction hypothesis}, which is that $\mathsf{Pr}_{\mathsf{ACA}_0}(\forall\gamma<\alpha \; A(\gamma))$, i.e.,
$$ \mathsf{Pr}_{\mathsf{ACA}_0}\Big(\forall\gamma<\alpha \big(\mathsf{ACA}_0+\mathsf{RFN}^{1+\gamma}_{\Pi^1_1}(\mathsf{ACA}_0) \textrm{ proves } \mathsf{WO}(\varepsilon_{\gamma})\big)\Big) . $$
We need to show that $A(\alpha)$, i.e., $\mathsf{ACA}_0+\mathsf{RFN}^{1+\alpha}_{\Pi^1_1}(\mathsf{ACA}_0) \vdash \mathsf{WO}(\varepsilon_{\alpha})$.

To prove the claim we reason as follows:
\begin{flalign*}
\mathsf{ACA}_0 &\vdash \forall\gamma<\alpha (\mathsf{ACA}_0+\mathsf{RFN}^{1+\gamma}_{\Pi^1_1}(\mathsf{ACA}_0) \textrm{ proves } \mathsf{WO}(\varepsilon_{\gamma}))\\
& \textrm{ by the reflexive induction hypothesis.}\\
\mathsf{ACA}_0 + \mathsf{RFN}^{1+\alpha}_{\Pi^1_1}(\mathsf{ACA}_0) &\vdash \forall\gamma<\alpha (\mathsf{ACA}_0+\mathsf{RFN}^{1+\gamma}_{\Pi^1_1}(\mathsf{ACA}_0) \textrm{ proves } \mathsf{WO}(\varepsilon_{\gamma}))\\
\mathsf{ACA}_0 + \mathsf{RFN}^{1+\alpha}_{\Pi^1_1}(\mathsf{ACA}_0) &\vdash \forall\gamma<\alpha \; \forall\delta<\varepsilon_{\gamma+1} (\mathsf{ACA}_0+\mathsf{RFN}^{1+\gamma}_{\Pi^1_1}(\mathsf{ACA}_0) \textrm{ proves } \mathsf{WO}(\delta))\\
&\textrm{ by Proposition \ref{clarify}}\\
\mathsf{ACA}_0 + \mathsf{RFN}^{1+\alpha}_{\Pi^1_1}(\mathsf{ACA}_0) &\vdash \forall\gamma<\alpha \; \forall\delta<\varepsilon_{\gamma+1} \; \mathsf{WO}(\delta)\\
&\textrm{ since well-foundedness is $\Pi^1_1$.}\\
\mathsf{ACA}_0 + \mathsf{RFN}^{1+\alpha}_{\Pi^1_1}(\mathsf{ACA}_0) &\vdash  \forall\delta<\varepsilon_{\alpha} \; \mathsf{WO}(\delta)\\
\mathsf{ACA}_0 + \mathsf{RFN}^{1+\alpha}_{\Pi^1_1}(\mathsf{ACA}_0) &\vdash  \mathsf{WO}(\varepsilon_\alpha)
\end{flalign*}
This completes the proof of the theorem.
\end{proof}


\subsection{The Main Lemma}

Before presenting the proof of the main lemma for \eqref{eq:bigtriangleup}, we introduce the following notation for convenience: We write $\mathsf{PA}^\alpha$ to denote the pseudo-$\Pi^1_1$ pendant of $\mathsf{ACA}_0 +\mathsf{WO}(\alpha)$, i.e.,
$$\mathsf{PA}^\alpha:=\mathsf{PA}(X)+\{ \mathsf{TI}(\alpha,\varphi)\mid \varphi \textrm{ is pseudo-$\Pi^1_1$} \}.$$

\begin{lemma}[$\mathsf{ACA}_0$] 
\label{analysis}For any ordinal notation $\alpha$:
$$\mathsf{ACA}_0 + \mathsf{WO}(\varepsilon_\alpha) \vdash \forall\gamma<\alpha \; \mathsf{RFN}_{\Pi^1_1}\big(\mathsf{ACA}_0+\mathsf{WO}(\varepsilon_\gamma)\big).$$
\end{lemma}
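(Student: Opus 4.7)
The plan is to carry out, inside $\mathsf{ACA}_0+\mathsf{WO}(\varepsilon_\alpha)$, a uniform ordinal analysis of each theory $\mathsf{ACA}_0+\mathsf{WO}(\varepsilon_\gamma)$ for $\gamma<\alpha$, using cut-elimination for the infinitary calculus of Definition~\ref{proofdefinition}. I would reason in $\mathsf{ACA}_0+\mathsf{WO}(\varepsilon_\alpha)$; fix $\gamma<\alpha$, a pseudo $\Pi^1_1$ formula $\varphi(X,\vec{x})$, and parameters $\vec{p}$; and suppose $\mathsf{ACA}_0+\mathsf{WO}(\varepsilon_\gamma)\vdash\forall X\,\varphi(X,\vec{p})$. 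By Lemma~\ref{conservativity} applied with the pseudo $\Pi^1_1$ axiom $\mathsf{TI}(\varepsilon_\gamma,\cdot)$, I get a finitary derivation $\mathsf{PA}^{\varepsilon_\gamma}\vdash\varphi(X,\vec{p})$ that, by compactness, uses only finitely many axioms $A_1,\dots,A_n$, each a $\mathsf{PA}$-axiom or an instance $\mathsf{TI}(\varepsilon_\gamma,\theta_i)$.

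I would then embed this finitary proof via Proposition~\ref{embedding} to obtain $\vdash^{\delta}_{r}\varphi(X,\vec{p})$ with $\delta=\mathsf{tc}(A_1)\#\cdots\#\mathsf{tc}(A_n)+n$ and $r=\max_i\mathsf{rk}(A_i)$. Lemma~\ref{bound} bounds each $\mathsf{PA}$-axiom summand by $\omega+5$, and Proposition~\ref{ordinals} bounds each $\mathsf{tc}(\mathsf{TI}(\varepsilon_\gamma,\theta_i))$ by $\varepsilon_\gamma+2$ (since $\mathsf{otyp}(\varepsilon_\gamma)$ is a limit). Because $\varepsilon_{\gamma+1}$ is an epsilon number closed under natural sums of smaller ordinals, $\delta<\varepsilon_{\gamma+1}$. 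Iterating Theorem~\ref{elimination} exactly $r$ times eliminates every cut, producing $\vdash^{\delta'}_{0}\varphi(X,\vec{p})$ where $\delta'$ is the $r$-fold application of $\beta\mapsto\omega^\beta$ to $\delta$; using closure of $\varepsilon_{\gamma+1}$ under $\omega^{(\cdot)}$, still $\delta'<\varepsilon_{\gamma+1}\leq\varepsilon_\alpha$. Finally, Lemma~\ref{correctness} applied to this cut-free derivation yields $\forall X\,\varphi(X,\vec{p})$; the hypothesis $\mathsf{WO}(\varepsilon_\alpha)$ enters exactly here, converting the infinite descent furnished by the proof of Lemma~\ref{correctness} into an outright contradiction.

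I expect the main obstacle to be coordinating two constraints simultaneously: keeping the ordinal heights strictly below $\varepsilon_\alpha$ through both the embedding step and the $r$-fold cut-elimination, and ensuring that each step is genuinely formalizable in $\mathsf{ACA}_0$ for the concrete calculus of Definition~\ref{proofdefinition}. The first is handled once and for all by the choice of $\varepsilon$-numbers, since $\varepsilon_{\gamma+1}\leq\varepsilon_\alpha$ absorbs both natural sums and $\omega$-towers of smaller ordinals; the second is handled by invoking the $\mathsf{ACA}_0$-provable versions of embedding, cut-elimination, and correctness already recorded in \textsection\ref{prelim-section} and \textsection\ref{cut-lemmas}.
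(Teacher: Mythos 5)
Your proposal is correct and takes essentially the same route as the paper's own proof: reduce via Lemma \ref{conservativity} to a finitary $\mathsf{PA}^{\varepsilon_\gamma}$-derivation, embed it with the truth-complexity bounds of Lemma \ref{bound} and Proposition \ref{ordinals}, apply Theorem \ref{elimination} repeatedly with all heights absorbed below $\varepsilon_{\gamma+1}\leq\varepsilon_\alpha$, and invoke Lemma \ref{correctness}, which is exactly where $\mathsf{WO}(\varepsilon_\alpha)$ does its work. The only (harmless) deviations are that you keep the finitely many instances $\mathsf{TI}(\varepsilon_\gamma,\theta_i)$ separate and bound their symmetric sum directly, where the paper first merges them into the single instance $\mathsf{TI}\big(\varepsilon_\gamma,\bigvee_{i\leq k}\psi_i\big)$, and that you leave implicit the paper's opening observation that $\mathsf{RFN}_{\Pi^1_1}$ follows in $\mathsf{ACA}_0$ from a single instance of itself, which is what licenses fixing one formula $\varphi$ (e.g., a universal $\Pi^1_1$ formula) while proving the internally quantified statement.
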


\begin{proof}
Recall that $\Pi^1_1$-reflection follows from a single instance of itself in $\mathsf{ACA}_0$. Hence, there is a fixed $\Pi^1_1$ formula $\varphi(\vec{x})$ such that it suffices to prove in $\mathsf{ACA}_0+\mathsf{WO}(\varepsilon_\alpha)$ that for $\gamma<\alpha$ and number parameters $\vec{p}$ we have
\begin{equation}
\label{RFN_from_WO_main_claim}
\mathsf{Pr}_{\mathsf{ACA}_0+\mathsf{WO}(\varepsilon_\gamma)}(\varphi(\vec{p}))\to \varphi(\vec{p}).\end{equation}

Suppose that $\mathsf{ACA}_0+\mathsf{WO}(\varepsilon_\gamma) \vdash \varphi(\vec{p})$ for some $\gamma<\alpha$. By Lemma \ref{conservativity}, we infer that $\mathsf{PA}^{\varepsilon_\gamma}\vdash \varphi^\star(\vec{p})$ for the pseudo-$\Pi^1_1$ version $\varphi^\star(\vec{p})$ of $\varphi(\vec{p})$. This is to say that 
$$ \mathsf{PA}\vdash \neg \mathsf{TI}(\varepsilon_\gamma, \psi_1)\vee...\vee\neg\mathsf{TI}(\varepsilon_\gamma,\psi_k)\vee\varphi^\star(\vec{p})$$ 
for some pseudo-$\Pi^1_1$ formulas $\psi_1,...,\psi_k$. Then there is a proof in pure first-order logic of the sequent 
$$ \neg A_1,...,\neg A_n, \bigvee_{i\leq k} \neg\mathsf{TI}(\varepsilon_\gamma,\psi_i),\varphi^\star(\vec{p})$$ 
for some axioms $A_1,\dots,A_n$ of $\mathsf{PA}$. Since this is a proof in pure first-order logic, this means that there are finite $h$ and $\rho$, such that:
$$\vdash^h_\rho \neg A_1,\dots,\neg A_n, \bigvee_{i\leq k} \neg\mathsf{TI}(\varepsilon_\gamma,\psi_i),\varphi^\star(\vec{p}).$$
By Lemma \ref{bound}, for each $A_i$, there is a cut-free proof $A_i$ whose height is less than $\omega+5$. Hence, $n$ applications of the cut rule yields:
$$\vdash^{\omega+4+n}_{\sigma} \bigvee_{i\leq k} \neg\mathsf{TI}(\varepsilon_\gamma,\psi_i),\varphi^\star(\vec{p})$$
where $\sigma:=\mathsf{max}\{\rho, \mathsf{rk}(A_1),\dots,\mathsf{rk}(A_n) \}+1$

By Lemma \ref{ordinals}, there is a cut-free proof of $\mathsf{TI}(\varepsilon_\gamma, \bigvee_{i\leq k}\psi_i)$ whose height is at most $\varepsilon_\gamma+2$. Hence, $k-1$ applications of conjunction introduction yields:
$$\vdash^{\varepsilon_\gamma+2+(k-1)}_0\bigwedge_{i\leq k}\mathsf{TI}(\varepsilon_\gamma,\psi_i).$$
Note that:
$$(\varepsilon_\gamma+2+(k-1))\#(\omega+4+n) = \varepsilon_\gamma+\omega+k+n+5.$$
Thus, by Lemma \ref{embedding}:
$$\vdash^{\varepsilon_\gamma+\omega+k+n+5}_r \varphi^\star(\vec{p})$$
where $r:=\mathsf{max}\Big\{\sigma, \mathsf{rk}\big(\bigwedge_{i\leq k}\mathsf{TI}(\varepsilon_\gamma,\psi_i)\big)\Big\}$.

By applying Lemma \ref{elimination}, we infer that 
$$ \vdash^{\varepsilon_{\gamma+1}}_0 \varphi^\star(\vec{p}).$$ Lemma \ref{correctness} implies that $\varphi(\vec{p})$ is true.
\end{proof}


\subsection{The Hard Direction}\label{hard-direction}

With this lemma on board we are ready for the proof of the theorem. Once again, we use Schmerl's technique of reflexive induction; for an overview of this technique see \textsection \ref{reflexive-section}.

\begin{theorem}
$\mathsf{ACA}_0+\mathsf{WO}(\varepsilon_{\alpha}) \vdash \mathsf{RFN}^{1+\alpha}_{\Pi^1_1}(\mathsf{ACA}_0).$
\end{theorem}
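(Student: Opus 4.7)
The plan is to mimic the proof of Theorem \ref{suggestion}, replacing the "well-foundedness is $\Pi^1_1$" observation with the soundness content provided by Lemma \ref{analysis}. Let $B(\alpha)$ abbreviate the formalized claim that $\mathsf{ACA}_0+\mathsf{WO}(\varepsilon_\alpha) \vdash \mathsf{RFN}^{1+\alpha}_{\Pi^1_1}(\mathsf{ACA}_0)$. By Schmerl's reflexive induction it suffices to prove in $\mathsf{ACA}_0$ that $\mathsf{Pr}_{\mathsf{ACA}_0}\bigl(\forall\gamma\prec\alpha\, B(\gamma)\bigr) \to B(\alpha)$. Assuming the reflexive induction hypothesis, provable $\Sigma_1$-completeness lets me extract, uniformly in $\gamma$, the fact that $\forall\gamma\prec\alpha\, \mathsf{Pr}_{\mathsf{ACA}_0+\mathsf{WO}(\varepsilon_\gamma)}\bigl(\mathsf{RFN}^{1+\gamma}_{\Pi^1_1}(\mathsf{ACA}_0)\bigr)$.

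Next I would construct, inside $\mathsf{ACA}_0+\mathsf{WO}(\varepsilon_\alpha)$, a proof of $\mathsf{RFN}^{1+\alpha}_{\Pi^1_1}(\mathsf{ACA}_0)$. Unfolding the fixed-point definition, this reduces to showing $\mathsf{RFN}_{\Pi^1_1}\bigl(\mathsf{ACA}_0+\mathsf{RFN}^\beta_{\Pi^1_1}(\mathsf{ACA}_0)\bigr)$ for every $\beta \prec 1+\alpha$. Every such $\beta$ is either $0$ or of the form $1+\gamma$ with $\gamma\prec\alpha$, so it suffices to treat these two cases separately.

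In the principal case $\beta=1+\gamma$ with $\gamma\prec\alpha$, suppose $\mathsf{ACA}_0+\mathsf{RFN}^{1+\gamma}_{\Pi^1_1}(\mathsf{ACA}_0)$ proves some $\Pi^1_1$ sentence $\varphi$. Combining the deduction theorem with the extracted provability $\mathsf{Pr}_{\mathsf{ACA}_0+\mathsf{WO}(\varepsilon_\gamma)}\bigl(\mathsf{RFN}^{1+\gamma}_{\Pi^1_1}(\mathsf{ACA}_0)\bigr)$, I would conclude that $\mathsf{ACA}_0+\mathsf{WO}(\varepsilon_\gamma) \vdash \varphi$. Then Lemma \ref{analysis}, which gives $\mathsf{RFN}_{\Pi^1_1}\bigl(\mathsf{ACA}_0+\mathsf{WO}(\varepsilon_\gamma)\bigr)$ inside $\mathsf{ACA}_0+\mathsf{WO}(\varepsilon_\alpha)$, yields the truth of $\varphi$. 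For the boundary case $\beta=0$ I need $\mathsf{RFN}_{\Pi^1_1}(\mathsf{ACA}_0)$: if $\alpha\succ 0$ this is immediate from Lemma \ref{analysis} applied with $\gamma=0$ (since $\mathsf{ACA}_0 \subseteq \mathsf{ACA}_0+\mathsf{WO}(\varepsilon_0)$), while if $\alpha=0$ the proof of Lemma \ref{analysis} still goes through when no $\mathsf{TI}$ axioms appear, producing a cut-free infinitary derivation of height below $\varepsilon_0$ to which Lemma \ref{correctness} applies.

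The main obstacle I expect is the bookkeeping around the two layers of formalization. Specifically, I need to justify uniformly in $\gamma\prec\alpha$ the passage from the reflexive induction hypothesis (an external $\mathsf{Pr}_{\mathsf{ACA}_0}$ statement) to the in-theory provability of $\mathsf{RFN}^{1+\gamma}_{\Pi^1_1}(\mathsf{ACA}_0)$ over $\mathsf{ACA}_0+\mathsf{WO}(\varepsilon_\gamma)$, and then to carry out the combination-with-Lemma-\ref{analysis} step as a single schema inside $\mathsf{ACA}_0+\mathsf{WO}(\varepsilon_\alpha)$ rather than instance-by-instance. Once this uniformity is in place, the logical core of the argument is short and the theorem follows, and pairing it with Theorem \ref{suggestion} yields $|T|_{\mathsf{ACA}_0}=|T|_{\mathsf{WO}}$ for $\Pi^1_1$-sound extensions $T$ of $\mathsf{ACA}_0^+$ by the standard comparison of ranks.
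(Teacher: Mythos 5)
Your proposal is correct and takes essentially the same route as the paper's proof: reflexive induction on $\alpha$, transferring the induction hypothesis into $\mathsf{ACA}_0+\mathsf{WO}(\varepsilon_\alpha)$ (note this step is just monotonicity of provability, not $\Sigma_1$-completeness, and the statement $\forall\gamma\prec\alpha\,\mathsf{Pr}_{\mathsf{ACA}_0+\mathsf{WO}(\varepsilon_\gamma)}(\mathsf{RFN}^{1+\gamma}_{\Pi^1_1}(\mathsf{ACA}_0))$ becomes available only \emph{inside} the object theory, not at the meta-level), and then combining it with Lemma \ref{analysis} and the fixed-point definition of $\mathsf{RFN}^{1+\alpha}_{\Pi^1_1}(\mathsf{ACA}_0)$. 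Your explicit treatment of the boundary case $\beta=0$ is a detail the paper leaves implicit, and you resolve it correctly using $\mathsf{WO}(\varepsilon_0)$.
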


\begin{proof}
We let $A(\alpha)$ denote the claim that $\mathsf{ACA}_0+\mathsf{WO}(\varepsilon_{\alpha}) \vdash \mathsf{RFN}^{1+\alpha}_{\Pi^1_1}(\mathsf{ACA}_0) $. We want to prove that $\mathsf{ACA}_0\vdash \forall\alpha A(\alpha)$. By reflexive induction it suffices to prove that $\mathsf{ACA}_0 \vdash \forall\alpha (\mathsf{Pr}_{\mathsf{ACA}_0}(\forall\gamma<\alpha\; A(\gamma)) \rightarrow A(\alpha))$.

\textbf{Reason in} $\mathsf{ACA}_0$. We assume the \emph{reflexive induction hypothesis}, which is that $\mathsf{Pr}_{\mathsf{ACA}_0}(\forall\gamma<\alpha, A(\gamma))$, i.e.,
$$ \mathsf{Pr}_{\mathsf{ACA}_0}\Big(\forall\gamma<\alpha \big(\mathsf{ACA}_0+\mathsf{WO}(\varepsilon_{\gamma})\text{ proves }\mathsf{RFN}^{1+\gamma}_{\Pi^1_1}(\mathsf{ACA}_0)\big)\Big) . $$
We need to show that $A(\alpha)$, i.e., $\mathsf{ACA}_0+\mathsf{WO}(\varepsilon_{\alpha}) \vdash \mathsf{RFN}^{1+\alpha}_{\Pi^1_1}(\mathsf{ACA}_0) $.

To prove the claim we reason as follows:
\begin{flalign*}
\mathsf{ACA}_0 &\vdash \forall\gamma<\alpha (\mathsf{ACA}_0 + \mathsf{WO}(\varepsilon_{\gamma})\text{ proves } \mathsf{RFN}^{1+\gamma}_{\Pi^1_1}(\mathsf{ACA}_0))\\
& \textrm{ by the reflexive induction hypothesis.}\\
\mathsf{ACA}_0 + \mathsf{WO}(\varepsilon_\alpha) &\vdash \forall\gamma<\alpha (\mathsf{ACA}_0 + \mathsf{WO}(\varepsilon_{\gamma})\text{ proves } \mathsf{RFN}^{1+\gamma}_{\Pi^1_1}(\mathsf{ACA}_0))\\
\mathsf{ACA}_0 + \mathsf{WO}(\varepsilon_\alpha) &\vdash \forall\gamma<\alpha \; \mathsf{RFN}_{\Pi^1_1}(\mathsf{ACA}_0+\mathsf{RFN}^{1+\gamma}_{\Pi^1_1}(\mathsf{ACA}_0)) \textrm{ by Lemma \ref{analysis}.}\\
\mathsf{ACA}_0 + \mathsf{WO}(\varepsilon_\alpha) &\vdash \mathsf{RFN}^{1+\alpha}_{\Pi^1_1}(\mathsf{ACA}_0) \textrm{ by definition.}\\
\end{flalign*}
This completes the proof of the theorem.
\end{proof}

As an immediate consequence of $(\bigcirc)$ and ($\bigtriangleup$) we derive the following:

\begin{theorem}
\label{iterations}
$\mathsf{ACA}_0+\mathsf{RFN}^{1+\alpha}_{\Pi^1_1}(\mathsf{ACA}_0) \equiv \mathsf{ACA}_0+\mathsf{WO}(\varepsilon_{\alpha}) $.
\end{theorem}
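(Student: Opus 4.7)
The plan is to note that Theorem \ref{iterations} is just the conjunction of the two directions $(\bigcirc)$ and $(\bigtriangleup)$, each of which has already been established as a separate theorem in the preceding subsections. So the proof consists of nothing more than assembling these two previously proved inclusions into a single equivalence statement.

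More concretely, the first direction of the equivalence — that every theorem of $\mathsf{ACA}_0 + \mathsf{WO}(\varepsilon_\alpha)$ follows from $\mathsf{ACA}_0 + \mathsf{RFN}^{1+\alpha}_{\Pi^1_1}(\mathsf{ACA}_0)$ — reduces, modulo the trivial inclusion of $\mathsf{ACA}_0$ in both theories, to showing $\mathsf{ACA}_0 + \mathsf{RFN}^{1+\alpha}_{\Pi^1_1}(\mathsf{ACA}_0) \vdash \mathsf{WO}(\varepsilon_\alpha)$, which is exactly Theorem \ref{suggestion} (the ``easy direction,'' proved by reflexive induction using Theorem \ref{preserves} to pass from $\mathsf{WO}(\varepsilon_\beta)$ to $\mathsf{WO}(\delta)$ for $\delta < \varepsilon_{\beta+1}$). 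Symmetrically, the other direction reduces to $\mathsf{ACA}_0 + \mathsf{WO}(\varepsilon_\alpha) \vdash \mathsf{RFN}^{1+\alpha}_{\Pi^1_1}(\mathsf{ACA}_0)$, which is the immediately preceding unnumbered theorem of \textsection\ref{hard-direction} (the ``hard direction,'' proved by reflexive induction that appeals to Lemma \ref{analysis}, itself a proper cut-elimination argument formalized in $\mathsf{ACA}_0$).

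There is no main obstacle here, because all of the substantive work — the reflexive inductions, the embedding into $\omega$-logic, the cut-elimination at the key stage via Theorem \ref{elimination}, and the soundness of cut-free proofs via Lemma \ref{correctness} — has already been carried out in the preceding subsections. The only thing to verify is that both directions truly are provable in the base theory (rather than merely true in the standard model), which is evident from the statements of Theorem \ref{suggestion} and the theorem of \textsection\ref{hard-direction}, both of which are phrased as provability in $\mathsf{ACA}_0$ of the quantified claim $\forall\alpha\, A(\alpha)$. Concatenating the two inclusions yields the asserted equivalence for every ordinal notation $\alpha$, which completes the proof.
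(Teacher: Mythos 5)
Your proposal is correct and matches the paper exactly: Theorem \ref{iterations} is stated there as an immediate consequence of $(\bigcirc)$ (Theorem \ref{suggestion}) and $(\bigtriangleup)$ (the hard-direction theorem of \textsection\ref{hard-direction}), with no further argument needed. Your additional observation that both directions are established as $\mathsf{ACA}_0$-provable claims is a sound, if implicit, point in the paper's presentation as well.
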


\subsection{The Main Theorem}\label{main-section}

In the proof of the main theorem we will rely on a theorem typically attributed to Kreisel, namely, that proof-theoretic ordinals are invariant under the addition of true $\Sigma^1_1$ sentences. To state this result, we introduce some notation:

\begin{definition}
$T\vdash^{\Sigma^1_1}\varphi$ if there is a true $\Sigma^1_1$ formula $A$ such that $T+A\vdash \varphi$.
\end{definition}

\begin{definition}
Let $|T|^\star_{\mathsf{WO}}=\mathsf{sup}\{\mathsf{otyp}(\alpha)\mid T\vdash^{\Sigma^1_1} \mathsf{WO}(\alpha)\}$.
\end{definition}

\begin{theorem}[Kreisel]\label{kreisel}
For $\Pi^1_1$-sound $T$ extending $\mathsf{ACA}_0$, $|T|_{\mathsf{WO}}=|T|_{\mathsf{WO}}^\star$.
\end{theorem}

The following lemma appears as Lemma 5.9 in \cite{pakhomov2018reflection}, where it receives a self-contained proof:
\begin{lemma}\label{pwlemma}
If $|T|_{\mathsf{ACA}_0}>\mathsf{otyp}(\alpha)$ then $T\vdash^{\Sigma^1_1} \mathsf{RFN}_{\Pi^1_1}\big(\mathsf{ACA}_0+ \mathsf{RFN}^\alpha_{\Pi^1_1}(\mathsf{ACA}_0)\big)$.
\end{lemma}

We are now ready for the proof of the main theorem.

\begin{theorem}
For any $\Pi^1_1$-sound extension $T$ of $\mathsf{ACA}_0^+$, $|T|_{\mathsf{ACA}_0}=|T|_{\mathsf{WO}}$.
\end{theorem}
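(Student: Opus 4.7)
My plan is to derive the main theorem from Theorem \ref{iterations} together with Theorem \ref{mm}, via a standard transfinite-induction analysis of reflection ranks. Write $R_\alpha := \mathsf{ACA}_0 + \mathsf{RFN}^\alpha_{\Pi^1_1}(\mathsf{ACA}_0)$. I would first record the auxiliary fact that $|R_\alpha|_{\mathsf{ACA}_0} \geq \alpha$ for every ordinal notation $\alpha$; this is immediate by transfinite induction, since the fixed-point definition of iterated reflection gives $R_\beta \prec_{\Pi^1_1} R_\alpha$ for each $\beta \prec \alpha$, so $|R_\alpha|_{\mathsf{ACA}_0} > |R_\beta|_{\mathsf{ACA}_0} \geq \beta$. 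A direct corollary is that if a $\Pi^1_1$-sound $T$ proves $\mathsf{RFN}^{1+\alpha}_{\Pi^1_1}(\mathsf{ACA}_0)$, then $|T|_{\mathsf{ACA}_0} \geq 1+\alpha$.

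Next I would establish $|T|_{\mathsf{ACA}_0} \geq |T|_{\mathsf{WO}}$. Given any $\beta < |T|_{\mathsf{WO}}$, we have $T \vdash \mathsf{WO}(\beta)$; since $T$ extends $\mathsf{ACA}_0^+$, Theorem \ref{mm} yields $T \vdash \mathsf{WO}(\varepsilon_\beta)$; and Theorem \ref{iterations} translates this into $T \vdash \mathsf{RFN}^{1+\beta}_{\Pi^1_1}(\mathsf{ACA}_0)$. The auxiliary fact then supplies $|T|_{\mathsf{ACA}_0} \geq 1+\beta$, and taking the supremum over $\beta$ completes this direction. This is precisely where the hypothesis $T \supseteq \mathsf{ACA}_0^+$ (rather than merely $T \supseteq \mathsf{ACA}_0$) is essential, since $\mathsf{ACA}_0$ alone does not suffice for well-foundedness preservation under the $\varepsilon$-function.

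For the reverse inequality $|T|_{\mathsf{ACA}_0} \leq |T|_{\mathsf{WO}}$ I would proceed by transfinite induction on $|T|_{\mathsf{ACA}_0}$, treating any $\Pi^1_1$-sound extension of $\mathsf{ACA}_0$ uniformly (this direction needs no extra hypothesis on $T$). Fix $\sigma < |T|_{\mathsf{ACA}_0}$ and choose $U \prec_{\Pi^1_1} T$ with $|U|_{\mathsf{ACA}_0} \geq \sigma$. By the induction hypothesis $|U|_{\mathsf{WO}} \geq \sigma$, so for every $\beta < \sigma$ with a primitive recursive notation, $U \vdash \mathsf{WO}(\beta)$. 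Provable $\Sigma_1$-completeness gives $T \vdash \mathsf{Pr}_U(\mathsf{WO}(\beta))$; combined with $T \vdash \mathsf{RFN}_{\Pi^1_1}(U)$ this yields $T \vdash \mathsf{WO}(\beta)$, so $|T|_{\mathsf{WO}} \geq \sigma$, and taking suprema concludes the induction.

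Once Theorem \ref{iterations} is in hand, the remaining work is mostly routine bookkeeping, and I do not anticipate any serious technical obstacle; the main conceptual point is rather the asymmetry between the two directions. The $\geq$ direction encapsulates all the genuinely proof-theoretic content by routing through Theorem \ref{mm}, which is exactly what forces the $\mathsf{ACA}_0^+$ hypothesis, while the $\leq$ direction is pure reflection-principle manipulation and would go through for any $\Pi^1_1$-sound extension of $\mathsf{ACA}_0$.
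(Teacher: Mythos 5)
Your overall decomposition is sound, and for the inequality $|T|_{\mathsf{WO}} \leq |T|_{\mathsf{ACA}_0}$ it is essentially the paper's argument: the paper likewise chains Theorem \ref{mm} with Theorem \ref{iterations} to pass from $T \vdash \mathsf{WO}(\alpha)$ to $T \vdash \mathsf{RFN}^{1+\alpha}_{\Pi^1_1}(\mathsf{ACA}_0)$. Your auxiliary fact $|R_\alpha|_{\mathsf{ACA}_0} \geq \alpha$ is left implicit in the paper (it is inherited from \cite{pakhomov2018reflection}), and making it explicit is a genuine improvement in completeness; just note that the transfinite induction must simultaneously verify that each $R_\beta$ is $\Pi^1_1$-sound, since otherwise $R_\beta$ lies outside the field of the well-founded restriction of $\prec_{\Pi^1_1}$ and $|R_\beta|_{\mathsf{ACA}_0}$ is undefined. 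For the converse inequality the paper simply declares it immediate from Theorem \ref{iterations} (again leaning on \cite{pakhomov2018reflection}), whereas you give a self-contained rank induction via provable $\Sigma_1$-completeness plus reflection; that is a legitimately different and more informative route, and you correctly observe that it needs no hypothesis beyond $\mathsf{ACA}_0$.

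There is, however, a genuine gap in that rank induction at successor ranks. Your step shows $|T|_{\mathsf{WO}} \geq \sigma$ for each $\sigma < |T|_{\mathsf{ACA}_0}$, so \emph{taking suprema} yields $|T|_{\mathsf{WO}} \geq |T|_{\mathsf{ACA}_0}$ only when $|T|_{\mathsf{ACA}_0}$ is a limit (or zero); if $|T|_{\mathsf{ACA}_0} = \lambda+1$ you obtain only $|T|_{\mathsf{WO}} \geq \lambda$. The root cause is that instance-by-instance $\Sigma_1$-completeness transfers each single $U$-provably well-founded order from $U$ to $T$ and hence delivers only the non-strict inequality $|T|_{\mathsf{WO}} \geq |U|_{\mathsf{WO}}$. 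The standard repair---and the route taken in \cite{pakhomov2018reflection}---is to prove strict monotonicity: if $U \prec_{\Pi^1_1} T$ then $|U|_{\mathsf{WO}} < |T|_{\mathsf{WO}}$. For this, use uniform reflection inside $T$ rather than one instance at a time: $T$ proves $\forall e\, \big(\mathsf{Pr}_U(\mathsf{WO}(e)) \to \mathsf{WO}(e)\big)$, hence proves the well-foundedness of the primitive recursive sum of all orders that $U$ provably well-orders (an order of type at least $|U|_{\mathsf{WO}}$), and then of that order with one point added on top (or of its $\omega$-power, via Theorem \ref{preserves}), whose type strictly exceeds $|U|_{\mathsf{WO}}$. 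With strict monotonicity in hand, $|T|_{\mathsf{ACA}_0} \leq |T|_{\mathsf{WO}}$ follows from the general fact that a strictly $\prec$-monotone ordinal-valued map bounds $\prec$-rank, successor stages included.

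A smaller slip, twice repeated: ``for every $\beta < \sigma$ with a primitive recursive notation, $U \vdash \mathsf{WO}(\beta)$'' conflates the proof-theoretic ordinal with provable well-foundedness of \emph{arbitrary} notations. From $|U|_{\mathsf{WO}} \geq \sigma$ one gets only that for each $\beta < \sigma$ \emph{some} primitive recursive order of type at least $\beta$ is $U$-provably well-founded; there are computable orders even of type $\omega$ whose well-foundedness a given sound theory fails to prove. The same care is needed in your $\geq$ direction at ``$\beta < |T|_{\mathsf{WO}}$ gives $T \vdash \mathsf{WO}(\beta)$.'' Both are harmless to fix: quantify over the witnessing orders themselves (the paper's treatment of notations, and the formulation of Theorems \ref{preserves} and \ref{mm} for arbitrary linear orders, makes this rewording available), and note that in the $\geq$ direction no successor problem arises because $|T|_{\mathsf{WO}}$ is automatically a limit---indeed an $\varepsilon$-number---for extensions of $\mathsf{ACA}_0$.
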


\begin{proof}
Let $T$ be a $\Pi^1_1$-sound extension of $\mathsf{ACA}_0^+$.

First, let's see that $|T|_{\mathsf{ACA}_0} \leq |T|_{\mathsf{WO}}$. Let $\mathsf{otyp}(\alpha)<|T|_{\mathsf{ACA}_0}$. We reason as follows:
\begin{flalign*}
T&\vdash^{\Sigma^1_1} \mathsf{RFN}_{\Pi^1_1}\big( \mathsf{ACA}_0+\mathsf{RFN}^\alpha_{\Pi^1_1}(\mathsf{ACA}_0)\big) \text{ by Lemma \ref{pwlemma}.}\\
T&\vdash^{\Sigma^1_1} \mathsf{WO}(\varepsilon_\alpha) \text{ by Theorem \ref{iterations}.}
\end{flalign*}
That is: $$ \mathsf{sup}\{\mathsf{otyp}(\varepsilon_\alpha) | \mathsf{otyp}(\alpha)<|T|_{\mathsf{ACA}_0}\} \leq |T|_{\mathsf{WO}}^\star.$$ 
By Lemma \ref{kreisel}:
$$\mathsf{sup}\{\mathsf{otyp}(\varepsilon_\alpha) | \mathsf{otyp}(\alpha)<|T|_{\mathsf{ACA}_0}\} \leq |T|_{\mathsf{WO}}.$$
It follows that $|T|_{\mathsf{ACA}_0} \leq |T|_{\mathsf{WO}}$.

To see that $|T|_{\mathsf{WO}} \leq |T|_{\mathsf{ACA}_0}$, we reason as follows:
\begin{flalign*}
\mathsf{ACA}_0^+ &\vdash \forall \mathcal{X} (\mathsf{WO}(\mathcal{X}) \rightarrow \mathsf{WO}(\varepsilon_\mathcal{X})) \textrm{ by Theorem \ref{mm}.}\\
\mathsf{ACA}_0^+ &\vdash \mathsf{WO}(\alpha) \rightarrow \mathsf{WO}(\varepsilon_\alpha) \textrm{ by instantiation.}\\
\mathsf{ACA}_0^+ &\vdash \mathsf{WO}(\varepsilon_\alpha) \rightarrow \mathsf{RFN}_{\Pi^1_1}^{1+\alpha}(\mathsf{ACA}_0) \textrm{ by Theorem \ref{iterations}.}\\
\mathsf{ACA}_0^+ &\vdash \mathsf{WO}(\alpha) \rightarrow \mathsf{RFN}_{\Pi^1_1}^{1+\alpha}(\mathsf{ACA}_0) \textrm{ from the previous two lines.}
\end{flalign*}
Let $T$ extend $\mathsf{ACA}_0^+$ and suppose $T \vdash \mathsf{WO}(\alpha)$. Then $T \vdash \mathsf{RFN}_{\Pi^1_1}^{1+\alpha}(\mathsf{ACA}_0)$.

This completes the proof of the theorem.
\end{proof}

\bibliographystyle{plain}
\bibliography{bibliography.bib}

\end{document}